\newtheorem{theorem}{Theorem}[section]
\newtheorem{proposition}[theorem]{Proposition}
\newtheorem{corollary}[theorem]{Corollary}
\newtheorem{lemma}[theorem]{Lemma}
\newtheorem{problem}[theorem]{Problem}
\newtheorem{claim}[theorem]{Claim}
\theoremstyle{definition}
\newtheorem{definition}[theorem]{Definition}
\newcommand{\F}{\mathcal{F}}
\newcommand{\Sch}{\operatorname{Sch}}
\newcommand{\w}{\omega}
\newcommand{\Ult}{\operatorname{Ult}}
\newcommand{\Ker}{\operatorname{Ker}}
\title{Schur ultrafilters and Bohr compactifications of topological groups}
\author{S. Bardyla, P. Zlato\v{s}}
\address{Serhii Bardyla: University of Vienna, Institute of Mathematics, Vienna, Austria.}
\email{sbardyla@gmail.com}
\address{Pavol Zlato\v{s}: Comenius University, Faculty of Mathematics, Physics and Informatics, Bratislava, Slovakia}
\email{zlatos@fmph.uniba.sk}
\subjclass[2020]{22C05, 22A25, 54D80, 03E75}
\keywords{Schur ultrafilter, Bohr compactification, idempotent ultrafilter, chart group, chartification.}
\thanks{The research of the first named author was funded in whole by the Austrian Science Fund FWF [10.55776/ESP399]. The research of the second named author was supported by grant no.~066UK--4/2023 of the Slovak grant agency KEGA}
\begin{document}
\begin{abstract}
In this paper, we investigate Schur ultrafilters on groups.
Using the algebraic structure of Stone-\v{C}ech compactifications of discrete groups,
we give a new description of Bohr compactifications of topological groups in terms of Schur ultrafilters.
This approach allows us to characterize which compact Hausdorff admissible right topological groups (briefly, chart groups)
are topological groups. Namely, a chart group $G$ is a topological group if and only if every Schur ultrafilter on $G$ converges to the unit of $G$.
\end{abstract}
\maketitle

\section{Introduction}
Ultrafilters on topological groups form a well-established topic in Topological Algebra with plenty of applications in Combinatorics and Topological Dynamics, see the monographs~\cite{HS, Zel} and references therein. Schur ultrafilters were introduced by Protasov~\cite{P}, as a generalization of idempotent ultrafilters.  
In this paper, we investigate Schur ultrafilters on groups and apply them to Bohr compactifications of topological groups and automatic continuity of group operations in chart groups.

Bohr compactifications of topological groups represent another widely studied topic,
see \cite{CH, GF, GF1, Gis1, Gis2, H}. Since the structure of Bohr compactification is rather non-trivial, the results providing its description are of particular interest.
For instance, Anzai and Kakutani~\cite{A} described
Bohr compactifications of locally compact commutative groups using Pontryagin duality.
A connection between Bohr compactifications and Chu duality was established
by Ferrer and Hern\'{a}ndez~\cite{FH}. 
Gismatullin,
Jagiella and Krupi\'nski~\cite{GJK} used model theory to describe Bohr compactifications of
certain matrix groups. Bohr compactifications of the arithmetic, lamplighter and Heisenberg groups were described by Bekka in~\cite{Bekka, Bekka1}.   Davey, Haviar and Priestley~\cite{DHP} studied Bohr
compactifications of fairly general algebraic structures within a category-theoretical framework.
Hart and Kunen~\cite{HK1,HK2} investigated Bohr compactifications of discrete structures, including groups, rings and semilattices.  
Yet another description of Bohr compactifications of discrete commutative groups was given in~\cite{Zlatos}, bringing to focus the algebraic structure of Stone-\v{Cech} compactifications and Schur ultrafilters. In the present paper, we develop the techniques from~\cite{Zlatos} and, using Schur ultrafilters, obtain a description of Bohr compactifications of all Hausdorff topological groups. In addition to Schur ultrafilters, a major role in this description is played by the {\em ultrafilter semigroup} of $G$, consisting of all ultrafilters convergent to the unit of the corresponding topological group $G$. The ultrafilter semigroups are of independent interest and have been studied by Zelenyuk in~\cite{Z1, Z2}.

Chart groups appear naturally in Topological Dynamics, as enveloping semigroups of particular dynamical systems, see~\cite{GM, GM1, GM3, GM4, GM5, GM6, Megre1}. One of the central problems in the theory of chart groups is when a chart group is a topological group. Namioka~\cite{N} showed that each metrizable chart group is a topological group. This result was later generalized by Moors and Namioka~\cite{MN} for first-countable chart groups, by Glasner and Megrelishvili~\cite{GM5} for Fr\'echet-Urysohn chart groups, and by Reznichenko~\cite{R0} for chart groups with countable tightness. The aforementioned problem is a partial case of a more general problem of automatic continuity of group operations, discussed in~\cite{BR, E, E1, FHW, Mil, M2, SS}. In this paper, using Schur ultrafilters, we find necessary and sufficient conditions for a chart group to be a topological group. 
Also, we introduce and describe 
a new universal compactification of right topological groups, which can be viewed as a counterpart of Bohr compactification in the class of chart groups.

This paper is organized as follows. In Section~\ref{sec1} we give necessary definitions and state the main results.  Section~\ref{sec2} is devoted to the combinatorial, algebraic, and topological properties of Schur ultrafilters. In Section~\ref{sec3} we discuss Bohr compactifications of topological groups. Chart groups and related concepts are studied in Section~\ref{sec4}.

\section{Preliminaries and the main results}\label{sec1}
In this paper, all topological spaces are assumed to be {\em Hausdorff}.
We identify the smallest infinite ordinal $\w$ with the set of all non-negative integers. 
For an equivalence relation $\rho$ on a set $X$ and $x\in X$ let $[x]_\rho=\{y\in X:(x,y)\in\rho\}$. Put $X/\rho=\{[x]_\rho: x\in X\}$. The map $h_\rho:X \rightarrow X/\rho$ given by $h_\rho(x)=[x]_\rho$ is called a {\em quotient map}. If the set $X$ is equipped with a topology, then,
by default, we assume that $X/\rho$ carries the {\em quotient topology}, i.e., the finest topology that makes the quotient map $h_\rho$ continuous. An equivalence relation $\rho$ on a topological space $X$ is called {\em closed} if $\rho$ is a closed subset of $X{\times}X$, equipped with the product topology.

For every element $s$ of a
semigroup $S$ the {\em right shift} $\rho_s$ is defined by $\rho_s(x)=xs$. Dually, for every $s\in S$ the {\em left shift} $\lambda_s$ is defined by $\lambda_s(x)=sx$. 
A semigroup $S$ endowed with a topology $\tau$ is called {\em right topological} if for each $s\in S$ the right shift $\rho_s$ is continuous.   
A right topological semigroup $S$ is called {\em semitopological} if for each $s\in S$ the left shift $\lambda_s$ is continuous. Following~\cite{HS}, for a right
topological semigroup $S$ the set $$\Lambda(S)=\{s\in S: \lambda_s \hbox{ is continuous}\}$$ is called a {\em topological center} of $S$. A group $G$ endowed with a topology is called a {\em topological group} if multiplication and inversion are continuous in $G$.

An equivalence relation $\rho$ on a semigroup $S$ is called a {\em congruence} if $(a_1,b_1)\in \rho$ and $(a_2,b_2)\in\rho$ imply $(a_1a_2,b_1b_2)\in\rho$ for any
$a_1,b_1,a_2,b_2 \in S$. The set $S/\rho$ endowed with the semigroup
operation defined by $[x]_\rho [y]_\rho = [xy]_\rho$ is called a {\em quotient semigroup}.
The quotient map $h_\rho: S\rightarrow S/\rho$
is referred to as the {\em quotient homomorphism}. For any homomorphism $f$ defined on a semigroup $S$ let $$\Ker(f)=\{(x,y)\in S{\times}S: f(x)=f(y)\}.$$ Clearly, $\Ker(f)$ is a congruence on $S$. A congruence $\rho$ on a semigroup $S$ is called a {\em group congruence} if $S/\rho$ is a group.
Since $S{\times}S$ is a closed congruence on $S$ and the intersection of any family of closed congruences is a closed congruence, for every relation $R \subseteq S \times S$ there exists the least
closed congruence on $S$ which contains $R$.    

The Stone-\v{C}ech compactification $\beta X$ of a discrete space $X$ is the set of all ultrafilters on $X$ endowed with a topology $\tau$ given by the base $\mathcal B=\{\langle A\rangle: A\subseteq X\}$, where $\langle A\rangle=\{u\in\beta X: A\in u\}$. Recall that each element $x\in X$ is identified with the principal ultrafilter $\{A\subseteq X: x\in A\}$. If $S$ is a discrete semigroup, then the semigroup operation on $S$ can be canonically lifted to a semigroup operation on $\beta S$ as follows: if $u,v\in\beta S$, then $uv$ is a filter generated by the base consisting of the sets $\bigcup_{x\in U}xV_x$, where $U\in u$ and $\{V_x:x\in U\}\subset v$ are arbitrary. Equivalently, $$A\in uv \iff \{
s\in S : \lambda_s^{-1}(A)\in v
\} \in u.$$  
By~\cite[Theorem~4.1]{HS}, the defined above semigroup operation on $\beta S$ is unique among those extending the operation of $S$ and satisfying the following two natural conditions:
\begin{itemize}
    \item[(i)] for each $u\in\beta S$ the right shift $\rho_u$ is continuous;
    \item[(ii)] for each $s\in S$ the left shift $\lambda_s$ is continuous.
\end{itemize}
Thus, for any discrete semigroup $S$, $\beta S$ endowed with the aforementioned operation is a compact right topological semigroup.  For more about the algebraic structure of $\beta S$ see the monograph~\cite{HS} and references therein. Compact right topological semigroups have been extensively studied in the literature; of particular relevance to this
paper are~\cite{DLS, J, Mi, MP, SS1}, including the monograph~\cite{Pym}. 

An element $e$ of a semigroup $S$ is called {\em idempotent} if $ee=e$. 
The following renowned theorem was proven in~\cite{E1}.
\begin{theorem}[Ellis]\label{classic}
Each compact right topological semigroup contains an idempotent.    
\end{theorem}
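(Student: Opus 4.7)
The plan is to prove this via Zorn's lemma applied to the poset of nonempty closed subsemigroups ordered by reverse inclusion, obtain a minimal element $T$, and then find an idempotent inside $T$ using the right topological structure.

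First I would let $\mathcal{A}$ denote the collection of all nonempty closed subsemigroups of $S$, partially ordered by reverse inclusion. Since $S \in \mathcal{A}$, this family is nonempty. For a chain $\mathcal{C} \subseteq \mathcal{A}$, the intersection $\bigcap \mathcal{C}$ is an intersection of a downward directed family of nonempty closed subsets of the compact space $S$, hence nonempty by the finite intersection property; it is clearly a closed subsemigroup. So every chain has an upper bound in $\mathcal{A}$ and Zorn's lemma gives a minimal element $T \in \mathcal{A}$, i.e., a nonempty closed subsemigroup of $S$ that does not properly contain any other nonempty closed subsemigroup.

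Next I would pick any $x \in T$ and look at $Tx = \rho_x(T)$. Since $\rho_x$ is continuous and $T$ is compact, $Tx$ is a closed subset of $T$; since $T$ is a subsemigroup containing $x$, for $t_1, t_2 \in T$ we have $xt_2 \in T$, hence $(t_1 x)(t_2 x) = t_1(xt_2)x \in Tx$, so $Tx$ is a closed subsemigroup contained in $T$. By minimality, $Tx = T$. Consequently the set
\[
E = \{t \in T : tx = x\} = T \cap \rho_x^{-1}(x)
\]
is nonempty (since $x \in Tx$ produces some $t \in T$ with $tx = x$). It is closed as the preimage of the point $x$ under the continuous map $\rho_x$ intersected with $T$, and it is a subsemigroup because $t_1 x = t_2 x = x$ implies $(t_1 t_2) x = t_1(t_2 x) = t_1 x = x$, using associativity only.

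Finally, $E$ is a nonempty closed subsemigroup of $T$, so by minimality of $T$ we conclude $E = T$. In particular $x \in E$, which means $xx = x$, so $x$ is the desired idempotent. The main conceptual ingredient is the interaction between compactness (yielding a minimal closed subsemigroup via Zorn) and right continuity (making both $Tx$ and $\rho_x^{-1}(x)$ closed); the only delicate point to get right is that left multiplication need not be continuous, so one must carefully use only $\rho_x$ and never $\lambda_x$ in the topological arguments.
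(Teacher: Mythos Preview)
Your argument is the standard, correct proof of the Ellis--Numakura lemma. The paper does not give its own proof of this statement; it merely cites Ellis's original paper~\cite{E1}, so there is nothing to compare against. One small implicit assumption in your write-up is that singletons are closed (needed so that $E = T \cap \rho_x^{-1}(\{x\})$ is closed); this amounts to the usual Hausdorff hypothesis, which is the standard convention for compact right topological semigroups and certainly holds in every application in the paper (quotients and subspaces of $\beta G$).
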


Since for every infinite discrete group $G$, the remainder $\beta G\setminus G$ is a closed subsemigroup of $\beta G$. Theorem~\ref{classic} implies the existence of a free idempotent ultrafilter on every infinite group $G$. It is easy to check that $u$ is an idempotent ultrafilter on a group $G$ if and only if for every $U\in u$ there exist $x\in U$ and $F_x\in u$ such that $xF_x\subseteq U$. The notion of a free idempotent ultrafilter can be naturally weakened as follows.

\begin{definition}
An ultrafilter $u$ on a group $G$ is called 
\begin{enumerate}[\rm (i)]
    \item {\em Schur} if for each $U\in u$ there exist $x,y\in U$ such that $xy\in U$;
    \item {\em infinitary Schur} if for each $U\in u$ there exist $x\in U$ and an infinite subset $V\subset U$ such that $xV\subseteq U$.
\end{enumerate}    
\end{definition} 

The set of all Schur ultrafilters on a group $G$ is denoted by $\Sch(G)$. If $G$ is a topological group, then by $G_d$ we denote the group $G$ endowed with the discrete topology. For a topological group $G$, let $\Ult(G)$ be the set of all ultrafilters on $G$ convergent to $1_G$. $\Ult(G)$ is a closed subsemigroup of $\beta G_d$ and is usually referred to as an {\em ultrafilter semigroup} of $G$.  The following two definitions are crucial for this paper.

\begin{definition}\label{def}
Let $G$ be a topological group. We shall consider the following congruences on the Stone-\v{C}ech compactification $\beta G_d$ of the discrete group $G_d$:

\begin{enumerate}[\rm(i)]
\item $\Theta$ is the least closed congruence such that $\{(u,1_G):u \hbox{ is idempotent}\}\subset \Theta$;
\item $\Xi$ is the least closed congruence such that $\{(u,1_G):u\in \Sch(G)\}\subset \Xi$;
\item $\Phi$ is the least closed congruence such that $$\{(u,1_G):u \hbox{ is idempotent}\}\cup \{(u,1_G): u\in\Ult(G)\}\subset \Phi;$$

\item $\Psi$ is the least closed congruence such that $$\{(u,1_G):u\in \Sch(G)\}\cup \{(u,1_G): u\in \Ult(G)\}\subset \Psi.$$
\end{enumerate}
\end{definition}

\begin{definition}\label{map}
For a topological group $G$ and a congruence $\rho$ on $\beta G_d$, let the map $\mathfrak h_\rho: G\rightarrow \beta G_d/\rho$ be defined by $\mathfrak h_\rho(x)=h_\rho(x)=[x]_\rho$.
\end{definition}

The {\em Bohr compactification} of a topological group $G$ is a pair $(\mathfrak b G, b)$, which consists of a compact topological group $\mathfrak b G$ and a continuous homomorphism $b: G\rightarrow \mathfrak b G$ such that $b(G)$ is dense in $\mathfrak b G$, and for each compact topological group $H$ and continuous homomorphism $f: G\rightarrow H$, there exists a continuous homomorphism $g: \mathfrak b G\rightarrow H$ such that $f=g\circ b$, i.e., the following diagram commutes: 

$$
\resizebox{0.18\textwidth}{!}
{\xymatrix{
G\ar[r]^b\ar[d]_f & \mathfrak b G\ar[ld]^g\\ H
}
}
$$

It is easy to see that the above property characterizes the Bohr compactification $\mathfrak b G$ up to a topological isomorphism.
The following description of
Bohr compactifications of discrete commutative groups was obtained in~\cite{Zlatos}.
It can be formulated in terms of the concepts introduced above as follows.

\begin{theorem}[Zlato\v{s}]\label{intro}
For any discrete commutative group $G$ the pair $(\beta G/\Xi, \mathfrak h_\Xi)$ is the Bohr compactification of $G$.   
\end{theorem}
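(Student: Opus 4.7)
The plan is to verify that $(\beta G/\Xi,\mathfrak h_\Xi)$ satisfies the defining properties of the Bohr compactification. Everything rests on the following key lemma, which I would prove first: in every compact Hausdorff commutative topological group $H$, each Schur ultrafilter converges to $1_H$. Suppose for contradiction that some $u\in\Sch(H)$ does not converge to $1_H$; fix an open $W\ni 1_H$ with $H\setminus W\in u$ and a symmetric open $V\ni 1_H$ with $V^3\subseteq W$. By compactness cover $H\setminus W$ by finitely many translates $g_1V,\ldots,g_nV$ with $g_i\in H\setminus W$, so that some $g_iV$ lies in $u$. The Schur property yields $x,y\in g_iV$ with $xy\in g_iV$; writing $x=g_ia$, $y=g_ib$, $xy=g_ic$ with $a,b,c\in V$ and using commutativity gives $g_i=cb^{-1}a^{-1}\in V^3\subseteq W$, contradicting $g_i\notin W$.

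The universal property then follows easily. For a continuous homomorphism $f:G\to H$ into a compact topological group, I first replace $H$ by $\overline{f(G)}$ so that $H$ is commutative, and extend $f$ continuously to a semigroup homomorphism $\tilde f:\beta G\to H$. Since $f$ is a group homomorphism, the ultrafilter on $H$ pushed forward by $f$ from any $u\in\Sch(G)$ is again Schur on $H$, so the lemma forces $\tilde f(u)=1_H=\tilde f(1_G)$. Hence the closed congruence $\Ker(\tilde f)$ contains $\{(u,1_G):u\in\Sch(G)\}$ and therefore contains $\Xi$, yielding a continuous factorization $g:\beta G/\Xi\to H$ with $g\circ\mathfrak h_\Xi=f$.

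The main obstacle is confirming that $\beta G/\Xi$ really is a compact topological group rather than merely a right topological semigroup. Compactness, Hausdorffness, and density of $\mathfrak h_\Xi(G)$ come from closedness of $\Xi$, and since every idempotent of $\beta G$ is Schur, $[1_G]_\Xi$ is the unique idempotent and acts as the identity. To upgrade this to a topological group structure I would invoke the classically known Bohr compactification $(\mathfrak bG,b)$: the factorization above applied to $b$ produces a continuous surjective semigroup homomorphism $p:\beta G/\Xi\to\mathfrak bG$, and the problem reduces to the reverse inclusion $\Ker(\tilde b)\subseteq\Xi$. One natural route is to use commutativity of $G$ to show $(uv,vu)\in\Xi$ for all $u,v\in\beta G$, making $\beta G/\Xi$ commutative; a compact commutative right topological semigroup with a unique idempotent equal to its identity is then a topological group by classical structure theory. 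Alternatively one can characterize $\Ker(\tilde b)$ combinatorially via almost periodic functions or Pontryagin duality. Either path involves delicate manipulation of Schur ultrafilters and constitutes the substantive content inherited from \cite{Zlatos}.
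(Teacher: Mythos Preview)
Your key lemma and the factorization argument are both fine; the paper records essentially the same facts as Proposition~\ref{compSchur} and Proposition~\ref{Schur}(i), with a shorter proof that does not use commutativity of $H$: if a Schur ultrafilter $u$ on a compact Hausdorff topological group converges to $x$, then for any neighbourhood $W$ of $x^2$ choose $U\in u$ with $UU\subseteq W$; since $UU\in u$ by Proposition~\ref{char}(iv), $u$ converges to $x^2$, whence $x=x^2=1$.

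The substantive divergence, and the place where your sketch has a genuine gap, is the passage from ``$\beta G/\Xi$ is a chart group'' to ``$\beta G/\Xi$ is a topological group''. Your route (a) proposes to show $(uv,vu)\in\Xi$ for all $u,v$, so that $\beta G/\Xi$ is commutative, hence semitopological, hence topological by Ellis. But the natural attempt---$h_\Xi(G)$ is central, and central dense implies commutative---does not close: from $g_\alpha q=qg_\alpha$ with $g_\alpha\to p$ one gets $g_\alpha q\to pq$ by right continuity, yet the conclusion $qg_\alpha\to qp$ needs $\lambda_q$ continuous, which is precisely what is at stake. A direct combinatorial proof that $(uv,vu)\in\Xi$ is not obvious either, and your alternative route (b) through $\Ker(\tilde b)$ is unnecessary once the topological group structure is known. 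So the ``main obstacle'' you identify is real, but neither of your suggested attacks resolves it as stated.

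The paper (via the more general Theorem~\ref{topgr}, of which Theorem~\ref{intro} is the discrete commutative case with $\Psi=\Xi$) bypasses commutativity entirely. The key observation is Protasov's: $uu^{-1}\in\Sch(G)$ for every $u\in\beta G$. Hence $[u]_\Xi[u^{-1}]_\Xi=[1_G]_\Xi$, so the group inverse of $[u]_\Xi$ is $[u^{-1}]_\Xi$. Since $\iota\colon u\mapsto u^{-1}$ is a homeomorphism of $\beta G$ and $h_\Xi\circ\iota=\operatorname{inv}\circ h_\Xi$, inversion on $\beta G/\Xi$ is continuous. Writing $\lambda_p=\operatorname{inv}\circ\rho_{p^{-1}}\circ\operatorname{inv}$ then gives continuity of all left shifts, so $\beta G/\Xi$ is a compact semitopological group, and Ellis's theorem finishes. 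This argument uses no commutativity and in fact proves the stronger Theorem~\ref{Bohr} for arbitrary discrete groups.
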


In this paper, we develop the ideas from~\cite{Zlatos} and obtain the following
description of Bohr compactification of an arbitrary topological group.
\begin{theorem}\label{topgr}
For any topological group $G$ the pair $(\beta G_d/\Psi,\mathfrak h_\Psi)$ is the Bohr compactification of $G$.
\end{theorem}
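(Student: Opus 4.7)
The plan is to verify the four defining properties of a Bohr compactification: that $\beta G_d/\Psi$ is a compact topological group, that $\mathfrak h_\Psi$ is a continuous homomorphism, that its image is dense, and that the universal factorisation property holds. Three of these are reasonably direct given the design of $\Psi$; the real obstacle is establishing that $\beta G_d/\Psi$ is actually a topological group and not merely a compact right topological semigroup.

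For the universal property, let $f\colon G\to H$ be a continuous homomorphism into a compact topological group $H$. By the Stone-\v{C}ech universal property applied to the discrete $G_d$, $f$ extends to a continuous map $\tilde f\colon \beta G_d\to H$, and since $H$ is a topological group (so $\Lambda(H)=H$), the identity $A\in uv\iff\{s:\lambda_s^{-1}(A)\in v\}\in u$ together with joint continuity of multiplication in $H$ shows $\tilde f$ is a semigroup homomorphism. I would then check that $\tilde f$ sends every generator of $\Psi$ to $1_H$: for $u\in\Ult(G)$, continuity of $f$ forces the pushforward of $u$ to converge to $1_H$, so $\tilde f(u)=1_H$; for $u\in\Sch(G)$, if $\tilde f(u)=h\neq 1_H$ then $h\neq h^2$, and joint continuity in $H$ yields an open $V\ni h$ with $VV\cap V=\emptyset$, contradicting the Schur property applied to $f^{-1}(V)\in u$. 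Hence $\Psi\subseteq\Ker(\tilde f)$, and $\tilde f$ factors as $\tilde f=g\circ h_\Psi$ for a continuous homomorphism $g\colon \beta G_d/\Psi\to H$, giving $f=g\circ\mathfrak h_\Psi$.

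Density of $\mathfrak h_\Psi(G)$ follows immediately from density of $G$ in $\beta G_d$ and continuity of $h_\Psi$. For continuity of $\mathfrak h_\Psi$ with respect to the original topology of $G$, since it is a homomorphism into a topological group it suffices to check continuity at $1_G$: given an open $U\ni[1_G]_\Psi$ in the quotient, $h_\Psi^{-1}(U)$ is an open subset of $\beta G_d$ containing $\Ult(G)\cup\{1_G\}$; if $\mathfrak h_\Psi^{-1}(U)$ were not a neighbourhood of $1_G$ in $G$, one could extend the neighbourhood filter of $1_G$ together with $G\setminus\mathfrak h_\Psi^{-1}(U)$ to an ultrafilter $u\in\Ult(G)$, and then any basic open $\langle B\rangle$ with $u\in\langle B\rangle\subseteq h_\Psi^{-1}(U)$ would satisfy $B\subseteq \mathfrak h_\Psi^{-1}(U)$, making $B$ and $G\setminus\mathfrak h_\Psi^{-1}(U)$ two disjoint members of $u$, a contradiction.

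The main obstacle is the topological-group structure of $\beta G_d/\Psi$. As a closed-congruence quotient of a compact right topological semigroup it is automatically compact right topological, with the dense subgroup $\mathfrak h_\Psi(G)$ sitting inside its topological centre. Since $\Psi$ has been engineered so that every Schur ultrafilter, and in particular every idempotent, collapses to $1_G$, one would argue first that $[1_G]_\Psi$ is the only idempotent of $\beta G_d/\Psi$, so that $\beta G_d/\Psi$ is a group (a chart group in the sense of Section~\ref{sec4}), and second that every Schur ultrafilter on $\beta G_d/\Psi$ itself converges to its unit. The characterisation announced in the abstract, that a chart group is a topological group iff every Schur ultrafilter on it converges to the unit, then closes the argument. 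The delicate step is lifting Schur ultrafilters on the quotient back through the closed surjection $h_\Psi$ to Schur ultrafilters on $\beta G_d$, so that the collapsing condition actually applies; this is where the non-commutative and non-discrete machinery developed in the body of the paper is indispensable and genuinely goes beyond the discrete commutative Theorem~\ref{intro}.
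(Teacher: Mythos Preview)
Your treatment of the universal property, density, and continuity of $\mathfrak h_\Psi$ matches the paper's, with only cosmetic differences. The genuine divergence---and a genuine gap---is in how you propose to show that $\beta G_d/\Psi$ is a topological group.

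You suggest invoking Theorem~\ref{main} (a chart group is a topological group iff every Schur ultrafilter on it converges to the unit). In the paper's logical architecture this is circular: the implication (iii)$\Rightarrow$(i) of Theorem~\ref{main} is proved via Proposition~\ref{autom}, which in turn relies on Theorem~\ref{Bohr}, itself a special case of the very Theorem~\ref{topgr} under discussion. Independently of that, the ``delicate step'' you flag---lifting a Schur ultrafilter on the quotient $\beta G_d/\Psi$ (or on its dense subgroup $h_\Psi(G)$) back to a Schur ultrafilter on $G$---is not carried out anywhere in the paper and does not seem to follow from anything established there. A preimage ultrafilter along a group homomorphism need not be Schur: given $U\in u$ one only knows that $\mathfrak h_\Psi(U)$ contains a product of two of its elements, which says nothing about $U$ itself.

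The paper's route is entirely different and sidesteps both issues. It exploits the fact (Proposition~\ref{Prot}) that $uu^{-1}\in\Sch(G)$ for every $u\in\beta G_d$, so $[u]_\Psi[u^{-1}]_\Psi=[1_G]_\Psi$ and hence $[u^{-1}]_\Psi$ is the group inverse of $[u]_\Psi$. Since $\iota\colon u\mapsto u^{-1}$ is a homeomorphism of $\beta G_d$ with $h_\Psi\circ\iota=\operatorname{inv}\circ h_\Psi$, the inversion $\operatorname{inv}$ on $\beta G_d/\Psi$ is continuous by Lemma~\ref{lem}. Then $\lambda_p=\operatorname{inv}\circ\rho_{\operatorname{inv}(p)}\circ\operatorname{inv}$ makes every left shift continuous, so the quotient is a compact semitopological group, and Ellis's Theorem~\ref{classic1} upgrades it to a topological group. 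No lifting of Schur ultrafilters from the quotient is required.
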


Observe that if a group $G$ is discrete, then $\Ult(G)=\{1_G\}$ and, as such, $\Psi=\Xi$.
Theorem~\ref{topgr} implies the following generalization of Theorem~\ref{intro}.

\begin{theorem}\label{Bohr}
For any discrete group $G$, the pair $(\beta G/\Xi, \mathfrak h_\Xi)$ is the Bohr compactification of $G$.   
\end{theorem}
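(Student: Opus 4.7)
The plan is to derive Theorem~\ref{Bohr} directly from Theorem~\ref{topgr} by specializing the latter to the discrete topology, as the remark preceding the statement already suggests. The whole argument hinges on identifying the ultrafilter semigroup $\Ult(G)$ when $G$ is discrete, and then observing that the definition of $\Psi$ collapses to the definition of $\Xi$.

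First I would verify that, for a discrete group $G$, one has $\Ult(G) = \{1_G\}$. Indeed, in a discrete space every point is open, so the only ultrafilter converging to $1_G$ is the principal ultrafilter at $1_G$. Consequently the additional generating set appearing in Definition~\ref{def}(iv),
\[
\{(u,1_G) : u \in \Ult(G)\} = \{(1_G,1_G)\},
\]
lies on the diagonal of $\beta G \times \beta G$ and is therefore contained in every congruence on $\beta G$. Hence $\Psi$ coincides with the least closed congruence containing $\{(u,1_G) : u \in \Sch(G)\}$, which is by Definition~\ref{def}(ii) exactly $\Xi$. Since $G$ is discrete we also have $G_d = G$, so $\beta G_d = \beta G$, and the induced maps $\mathfrak h_\Psi$ and $\mathfrak h_\Xi$ coincide as well.

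Having established $\Psi = \Xi$ and $\beta G_d = \beta G$, I would invoke Theorem~\ref{topgr}, which identifies $(\beta G_d/\Psi, \mathfrak h_\Psi)$ as the Bohr compactification of $G$; substituting gives that $(\beta G/\Xi, \mathfrak h_\Xi)$ is the Bohr compactification of $G$, as required.

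There is no genuine obstacle in this argument; the only point demanding care is the verification that $\Ult(G) = \{1_G\}$ in the discrete case, which is elementary. The heavy lifting has already been performed in Theorem~\ref{topgr}, and Theorem~\ref{Bohr} is essentially its formal corollary under the discrete specialization.
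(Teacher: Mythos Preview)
Your proposal is correct and matches the paper's own treatment exactly: the paper observes that for discrete $G$ one has $\Ult(G)=\{1_G\}$, hence $\Psi=\Xi$, and then states Theorem~\ref{Bohr} as an immediate consequence of Theorem~\ref{topgr}. There is nothing to add.
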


A right topological group $G$ is called {\em admissible} if the set $\Lambda(G)=\{x\in G: \lambda_x \hbox{ is continuous}\}$ is dense in $G$. Following~\cite{GM5}, a compact Hausdorff admissible right topological group is called a {\em chart} group. For more information on the structure of chart groups, see~\cite{FU, M3, N1}. 
A {\em universal chartification} of a right topological group $G$ is a pair $(\mathfrak c G, c)$ consisting of a chart group $\mathfrak c G$ and a continuous homomorphism $c: G\rightarrow \mathfrak c G$, which satisfy the following conditions:

\begin{enumerate}[\rm (i)]
\item $c(G)$ is dense in $\mathfrak c G$;
\item $c(G)\subseteq \Lambda(\mathfrak c G)$;
\item for each chart group $H$ and continuous homomorphism $f: G\rightarrow H$ such that $f(G)\subseteq \Lambda (H)$, there exists a continuous homomorphism $g: \mathfrak c G\rightarrow H$ such that $f= g\circ c$.
\end{enumerate}

The following theorem describes universal chartifications of right topological groups. 
\begin{theorem}\label{chartgr}
For any right topological group $G$ the pair $(\beta G_d/\Phi,\mathfrak h_\Phi)$ is the universal chartification of $G$.
\end{theorem}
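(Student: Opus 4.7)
The plan is to verify, in sequence, that $T := \beta G_d/\Phi$ is a chart group, that $\mathfrak h_\Phi$ is a continuous homomorphism with dense image inside $\Lambda(T)$, and finally the universal factoring property.

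For the first step, $T$ is compact Hausdorff because $\Phi$ is a closed congruence, and it is right topological by a routine quotient-topology check: the preimage under $h_\Phi$ of $\rho_t^{-1}(U)$ equals $\rho_{t'}^{-1}(h_\Phi^{-1}(U))$ for any lift $t'\in h_\Phi^{-1}(t)$. The decisive point is that $T$ is a group. I would first lift idempotents: if $t \in T$ satisfies $t^2=t$, then the non-empty closed preimage $h_\Phi^{-1}(t)$ is a subsemigroup of $\beta G_d$, and by Theorem~\ref{classic} it contains an idempotent $v$; by the definition of $\Phi$ we have $(v,1_G)\in\Phi$, so $t = [v]_\Phi = [1_G]_\Phi =: e$. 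With $e$ the unique idempotent, for any $t\in T$ the compact subsemigroup $Tt$ (a closed left ideal) contains an idempotent which must equal $e$, so some $s$ satisfies $st=e$; then $(ts)^2 = t(st)s = ts$, so $ts$ is also an idempotent and equals $e$, giving $s = t^{-1}$. Thus $T$ is a group. Admissibility follows at once: for each $a \in G$, $\lambda_a$ is continuous on $\beta G_d$ and descends to a continuous left shift on $T$, so $\mathfrak h_\Phi(G) \subseteq \Lambda(T)$, and this image is dense because $G$ is dense in $\beta G_d$.

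The map $\mathfrak h_\Phi$ is evidently a group homomorphism. Continuity at $1_G$ is precisely the statement that every $u \in \Ult(G)$ is $\Phi$-equivalent to $1_G$, which is built into the definition of $\Phi$. Continuity at an arbitrary $x \in G$ then follows from the right topological structures on both $G$ and $T$: a net $x_\alpha \to x$ in $G$ yields $x_\alpha x^{-1} \to 1_G$, hence $\mathfrak h_\Phi(x_\alpha x^{-1}) \to e$, and right-multiplying by $\mathfrak h_\Phi(x)$ in the right topological group $T$ gives $\mathfrak h_\Phi(x_\alpha) \to \mathfrak h_\Phi(x)$.

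For the universal property, given a chart group $H$ and a continuous homomorphism $f:G\to H$ with $f(G)\subseteq\Lambda(H)$, extend $f$ uniquely to a continuous map $\bar f:\beta G_d\to H$ via the universal property of the Stone--\v{C}ech compactification. To promote $\bar f$ to a semigroup homomorphism I would argue in two stages: first fix $a\in G$ and verify $\bar f(av)=f(a)\bar f(v)$ for all $v\in\beta G_d$ (both sides are continuous in $v$ because $a\in\Lambda(\beta G_d)$ and $f(a)\in\Lambda(H)$, and they agree on the dense set $G$); then fix $v\in\beta G_d$ and verify $\bar f(uv)=\bar f(u)\bar f(v)$ for all $u$, using the right topological structures of $\beta G_d$ and $H$ and the previous step. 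Now $\Ker(\bar f)$ is a closed congruence containing $(u,1_G)$ for every idempotent $u$ (since $\bar f(u)$ is an idempotent of the \emph{group} $H$, hence $1_H$) and for every $u \in \Ult(G)$ (since continuity of $f$ at $1_G$ forces the pushforward of $u$ along $f$ to converge to $1_H$ in $H$). Therefore $\Phi \subseteq \Ker(\bar f)$, and $\bar f$ factors as $g \circ h_\Phi$ for a continuous homomorphism $g:T \to H$ with $g\circ\mathfrak h_\Phi = f$; uniqueness of $g$ follows from density of $\mathfrak h_\Phi(G)$ and Hausdorffness of $H$.

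The main obstacle is proving that $T$ is a group: the definition of $\Phi$ only kills the \emph{existing} idempotents of $\beta G_d$, and a priori nothing prevents new idempotents from appearing in the quotient; the idempotent-lifting argument via Ellis--Numakura applied to $h_\Phi^{-1}(t)$ is essential, and it is precisely what allows the standard compact-semigroup inversion argument to conclude.
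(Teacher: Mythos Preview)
Your proof is correct and follows essentially the same architecture as the paper's: establish that the quotient is a chart group, that $\mathfrak h_\Phi$ is a continuous homomorphism with dense image in the topological center, and then verify the universal property by extending $f$ to $\beta G_d$ and checking that $\Phi\subseteq\Ker(\bar f)$. The paper packages these three steps as Propositions~\ref{key}, \ref{old}, and~\ref{gennew} and then simply invokes them, whereas you inline each argument; in particular, where the paper outsources the fact that $\beta G_d/\Phi$ is a group to \cite[Theorem~1]{Zlatos}, you supply the direct idempotent-lifting argument via Theorem~\ref{classic}, and where Proposition~\ref{key} computes $\phi(uv)$ from the explicit description of products in $\beta G_d$, you instead use the cleaner two-stage density argument (first freeze $a\in G$, then freeze $v\in\beta G_d$). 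Both routes are standard and yield the same conclusions; yours is more self-contained, the paper's more modular.
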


Observe that if a group $G$ is discrete, then $\Phi=\Theta$. This way,
Theorem~\ref{chartgr} yields the following.

\begin{theorem}\label{chart}
For any discrete group $G$, the pair $(\beta G/\Theta, \mathfrak h_\Theta)$ is the universal chartification of $G$.   
\end{theorem}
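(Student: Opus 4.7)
My plan is to deduce this result directly from Theorem~\ref{chartgr} by verifying that the congruences $\Phi$ and $\Theta$ coincide whenever $G$ carries the discrete topology. Since a discrete group satisfies $G = G_d$, once I establish $\Phi = \Theta$, the identification $(\beta G_d/\Phi, \mathfrak h_\Phi) = (\beta G/\Theta, \mathfrak h_\Theta)$ together with Theorem~\ref{chartgr} will immediately deliver the statement.

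First I would observe that in a discrete space a filter converges to a point $x$ if and only if $\{x\}$ lies in the filter, so the unique ultrafilter on a discrete group $G$ converging to $1_G$ is the principal ultrafilter at $1_G$. Under the identification of elements of $G$ with principal ultrafilters in $\beta G$, this gives $\Ult(G) = \{1_G\}$, and hence the set $\{(u,1_G) : u \in \Ult(G)\}$ appearing in Definition~\ref{def}(iii) collapses to the single pair $(1_G,1_G)$, which is automatically contained in every reflexive relation on $\beta G$, in particular in every closed congruence.

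Consequently, the least closed congruence on $\beta G$ containing $\{(u,1_G) : u \text{ is idempotent}\} \cup \{(1_G,1_G)\}$ coincides with the least closed congruence containing $\{(u,1_G) : u \text{ is idempotent}\}$; by Definition~\ref{def}(i), this latter congruence is exactly $\Theta$, so $\Phi = \Theta$, and the conclusion follows from Theorem~\ref{chartgr}. The main obstacle does not lie in this reduction, which is essentially bookkeeping about convergence in discrete topologies, but rather in Theorem~\ref{chartgr} itself, whose proof carries the full weight of the argument and which I am taking as already established.
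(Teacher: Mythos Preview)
Your proposal is correct and follows precisely the paper's own route: the paper states ``Observe that if a group $G$ is discrete, then $\Phi=\Theta$'' and derives Theorem~\ref{chart} directly from Theorem~\ref{chartgr}. Your argument simply spells out why $\Ult(G)=\{1_G\}$ in the discrete case, which the paper also records explicitly just after Definition~\ref{def}.
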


As was mentioned in the introduction,  one of the central problems in the theory of chart groups is when a chart group is a topological group. The following result gives necessary and sufficient conditions for a chart group to be a topological group.

\begin{theorem}\label{main}
For a chart group $G$ the following conditions are equivalent:

\begin{enumerate}[\rm (i)]
    \item $G$ is a topological group;
    \item every Schur ultrafilter on $G$ converges to $1_G$;
   \item there exists a dense subgroup $H\subseteq \Lambda(G)$ such that each Schur ultrafilter on $H$ converges to $1_G$.
    \end{enumerate}
\end{theorem}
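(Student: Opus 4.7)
The plan is to prove the cycle (i) $\Rightarrow$ (ii) $\Rightarrow$ (iii) $\Rightarrow$ (i). The first two implications are essentially direct; the decisive content lies in (iii) $\Rightarrow$ (i), which realizes $G$ as a quotient of the Bohr compactification of the discrete group $H$ via Theorem~\ref{Bohr}.

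For (i) $\Rightarrow$ (ii), let $\pi\colon\beta G_d\to G$ denote the Stone-\v{C}ech extension of the identity. Any ultrafilter $u$ converges in the compact Hausdorff space $G$ to $p=\pi(u)$; if $u$ is Schur I would show $p$ is idempotent, whence $p=1_G$ and $u\to 1_G$. If instead $p\neq p^2$, take disjoint open $V\ni p$ and $V'\ni p^2$; by joint continuity of multiplication in $G$ (which is where (i) enters), shrink to an open $W$ with $p\in W\subseteq V$ and $WW\subseteq V'$. Then $W\in u$ by convergence, while the Schur property yields $x,y\in W$ with $xy\in W$, contradicting $xy\in WW\subseteq V'\subseteq G\setminus W$.

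For (ii) $\Rightarrow$ (iii) I would take $H=\Lambda(G)$. It is dense by admissibility and is a subgroup of $G$: closure under products is composition of continuous maps, $1_G\in H$, and for $x\in H$ the map $\lambda_x$ is a continuous bijection of the compact Hausdorff space $G$, hence a homeomorphism, so $\lambda_{x^{-1}}=\lambda_x^{-1}$ is continuous. Any Schur ultrafilter $u$ on $H$ extends to $u^*=\{A\subseteq G:A\cap H\in u\}$ on $G$, which remains Schur since the Schur witnesses lie in $H$; (ii) then gives $u^*\to 1_G$, equivalently $u\to 1_G$.

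The main step, and the main obstacle, is (iii) $\Rightarrow$ (i). Let $\pi_H\colon\beta H_d\to G$ be the Stone-\v{C}ech extension of the inclusion; it is continuous with compact image containing the dense $H$, so it is surjective. The crucial claim is that $\pi_H$ is a semigroup homomorphism, for which the hypothesis $H\subseteq\Lambda(G)$ is used essentially. A two-step density argument settles it: for fixed $h\in H$ the maps $v\mapsto\pi_H(hv)$ and $v\mapsto h\pi_H(v)$ are both continuous (left shift by $h$ is continuous in $\beta H_d$ as a principal left shift and in $G$ since $h\in\Lambda(G)$) and agree on the dense $H$, so agree throughout; combined with continuity of right shifts in both semigroups, the same argument in the first variable yields $\pi_H(uv)=\pi_H(u)\pi_H(v)$ for all $u,v\in\beta H_d$. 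By (iii), $\pi_H(u)=1_G$ for every $u\in\Sch(H)$, so $\Ker(\pi_H)$ is a closed congruence containing $\{(u,1_H):u\in\Sch(H)\}$, and by minimality $\Xi\subseteq\Ker(\pi_H)$ with $\Xi$ as in Definition~\ref{def}(ii) for the discrete group $H$. Thus $\pi_H$ descends to a continuous surjective group homomorphism $\phi\colon\beta H_d/\Xi\to G$. By Theorem~\ref{Bohr}, $\beta H_d/\Xi$ is the Bohr compactification $\mathfrak{b}H_d$, a compact topological group; the induced continuous bijective group homomorphism $\mathfrak{b}H_d/\Ker(\phi)\to G$ is automatically a homeomorphism between compact Hausdorff spaces, and since any quotient of a compact topological group by a closed normal subgroup is itself a compact topological group, $G$ inherits that structure, completing the proof.
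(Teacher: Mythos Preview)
Your proof is correct and follows essentially the same route as the paper: the implication (i)~$\Rightarrow$~(ii) reproves Proposition~\ref{compSchur}, (ii)~$\Rightarrow$~(iii) takes $H=\Lambda(G)$, and (iii)~$\Rightarrow$~(i) uses the Stone--\v{C}ech extension $\pi_H\colon\beta H_d\to G$ (your two-step density argument is Proposition~\ref{key}) together with Theorem~\ref{Bohr} to realize $G$ as a quotient of a compact topological group. The paper packages the last step as Proposition~\ref{autom}, but your unpacked version is the same argument.
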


Theorem~\ref{topgr} is proven in Section~\ref{sec3}, after preliminary work done in Section~\ref{sec2}. Theorems~\ref{chartgr} and~\ref{main} are proven in Section~\ref{sec4}.

\section{Schur ultrafilters}\label{sec2}

In this section, groups are assumed to be discrete and infinite, unless stated otherwise. We start with the algebraic properties of Schur ultrafilters. The item (ii) of the following result is proven in~\cite[Lemma 5.3]{P}. For the sake of completeness, we include its brief proof.

\begin{lemma}\label{newrev}
Let $G$ be a group, $u\in \Sch(G)$ and $v\in \beta G$. Then
\begin{enumerate}[\rm(i)]
 \item if $U\in uv$, then $UU^{-1}\in u$;
    \item if $U\in vu$, then $U^{-1}U\in u$.
\end{enumerate}
\end{lemma}

\begin{proof}
(i) Fix an arbitrary $B\in u$ and $U\in uv$. There exist $F\in u$ and $\{V_x:x\in F\}\subset v$ such that $F\subseteq B$ and $\bigcup_{x\in F}xV_x\subseteq U$. As $u$ is a Schur ultrafilter, there exist elements $a,b,c\in F$ such that $ab=c$. Put $V=V_a\cap V_b\cap V_{c}$ and fix $t\in V$. Note that $bt\in bV\subseteq bV_b\subseteq U$, $ct\in cV\subseteq cV_c\subseteq U$ and $a=cb^{-1}=ct(bt)^{-1}\in UU^{-1}$. It follows that $UU^{-1}\cap B\neq \emptyset$. Since $B$ is arbitrarily chosen, $UU^{-1}\in u$.

(ii) Fix an arbitrary $B\in u$ and $U\in vu$. There exist $x\in G$ and $F\in u$ such that $F\subseteq B$ and $xF\subseteq U$. As $u$ is a Schur ultrafilter, there exist elements $a,b,c\in F$ such that $ab=c$. Note that $xa\in U$, $xc\in U$ and $b=a^{-1}c=(xa)^{-1}xc\in U^{-1}U$. It follows that $U^{-1}U\cap B\neq \emptyset$. Since $B$ is arbitrarily chosen, $U^{-1}U\in u$. 
\end{proof}

    



The following result characterizes Schur ultrafilters.

\begin{proposition}\label{char}
For any ultrafilter $u$ on a group $G$ the following conditions are equivalent:
\begin{enumerate}[\rm (i)]
\item $u$ is a Schur ultrafilter;
\item for each $U\in u$ the set $X_U=\{x\in U: \exists y\in U \text{ such that }xy\in U\}$  belongs to $u$;
\item
for each $U\in u$ the set $Y_U=\{y\in U: \exists x\in U \text{ such that }xy\in U\}$  belongs to $u$;
\item $UV\in u$ for any $U,V\in u$;
\item $UU^{-1}\in u$ for each $U\in u$;
\item $U^{-1}U\in u$ for each $U\in u$.
\end{enumerate}
\end{proposition}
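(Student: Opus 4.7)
The plan is to establish the four equivalences (i)$\Leftrightarrow$(ii), (i)$\Leftrightarrow$(iii), (i)$\Leftrightarrow$(iv), and (i)$\Leftrightarrow$(v) separately, using a single contradiction recipe for the direction (i)$\Rightarrow$(*): to verify that a certain subset $S \subseteq U$ (or $S \subseteq U \cap V$) belongs to $u$, assume $S \notin u$, form the set $W = U \setminus S \in u$ (respectively $W = (U \cap V) \setminus S \in u$), apply the Schur property to $W$ to obtain $x, y \in W$ with $xy \in W$, and derive a contradiction with the defining property of $S$. The reverse direction (*)$\Rightarrow$(i) is in each case a one-line extraction of a Schur triple from a nonempty set forced to lie in $U$.

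For (i)$\Rightarrow$(ii), if $W = U \setminus X_U$ contains a Schur pair $x, y$ with $xy \in W \subseteq U$, then by definition $x \in X_U$, contradicting $x \in W$. The argument for (i)$\Rightarrow$(iii) is identical with $y \in Y_U$ in place of $x \in X_U$. For (i)$\Rightarrow$(iv), if $UV \notin u$ set $W = (U \cap V) \setminus UV \in u$; the Schur pair in $W$ satisfies $x \in U$, $y \in V$, hence $xy \in UV$, contradicting $xy \in W$. Conversely, (iv)$\Rightarrow$(i) follows by choosing $V = U$ so that $U \cap UU \in u$ is nonempty, while (ii)$\Rightarrow$(i) and (iii)$\Rightarrow$(i) are immediate from the nonemptiness of $X_U$ and $Y_U$.

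The equivalence (i)$\Leftrightarrow$(v) is the most substantive step. The forward direction is Proposition~\ref{pro1} of Protasov. For (v)$\Rightarrow$(i), given $U \in u$ the set $U \cap UU^{-1} \in u$ is nonempty, so picking $z \in U$ of the form $z = xy^{-1}$ with $x, y \in U$, we find $zy = x \in U$, so the pair $(z, y)$ witnesses the Schur property for $U$. The only mildly nontrivial point of the whole proof is precisely this small algebraic reshuffling in (v)$\Rightarrow$(i), where the inverse appearing in $UU^{-1}$ must be absorbed back into a product $zy = x \in U$; every other implication is a mechanical application of the contradiction recipe above.
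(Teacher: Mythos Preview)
Your proof is correct and follows essentially the same route as the paper. The only cosmetic difference is in (i)$\Rightarrow$(iv): the paper argues directly that $UV$ meets every $W\in u$ by applying the Schur property to $U\cap V\cap W$, whereas you package the same idea as a contradiction against $W=(U\cap V)\setminus UV$; all other implications, including the algebraic reshuffling in (v)$\Rightarrow$(i), match the paper verbatim.
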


\begin{proof}
The implications (ii) $\Rightarrow$ (i) and (iii) $\Rightarrow$ (i) are obvious. 

(i) $\Rightarrow$ (ii): Seeking a contradiction, assume that there exists $U\in u$ such that $X_U\notin u$. Then $W=U\cap (G\setminus X_U)\in u$. Since the ultrafilter $u$ is Schur, there exist $x,y\in W$ such that $xy\in W\subseteq U$. But then $x\in X_U$, which contradicts the definition of $W$. 

The implication (i) $\Rightarrow$ (iii) can be checked similarly.

(i) $\Rightarrow$ (iv): Fix any $U,V\in u$. In order to show that $UV\in u$ fix an arbitrary $W\in u$. Since the ultrafilter $u$ is Schur, there exist $x,y\in U\cap V\cap W$ such that $xy\in U\cap V\cap W$. Taking into account that $xy\in UV\cap W$ and $W$ is arbitrarily chosen, we get that $UV\cap W\neq \emptyset$ for all $W\in u$. Since $u$ is an ultrafilter, we obtain $UV\in u$.

(iv) $\Rightarrow$ (i): Fix any $U\in u$. Since $UU\in u$, the set $UU\cap U$ is not empty. Thus, there exist $x,y\in U$ such that $xy\in U$, witnessing that the ultrafilter $u$ is Schur.

The implication (i) $\Rightarrow$ (v) follows from Lemma~\ref{newrev}(i) (set $v=1_G$). Similarly, the implication (i) $\Rightarrow$ (vi) follows from Lemma~\ref{newrev}(ii).

(v) $\Rightarrow$ (i): Fix any $U\in u$. Since the set $UU^{-1}\cap U$ is not empty, there exist $x,y,z\in U$ such that $xy^{-1}=z$. Then $zy=x\in U$, witnessing that the ultrafilter $u$ is Schur.

(vi) $\Rightarrow$ (i): Fix any $U\in u$. Since the set $U^{-1}U\cap U$ is not empty, there exist $x,y,z\in U$ such that $x^{-1}y=z$. Then $xz=y\in U$, witnessing that the ultrafilter $u$ is Schur.
\end{proof}

For an ultrafilter $u$ on a group $G$, let $u^{-1}=\{U^{-1}:U\in u\}$. Clearly, $u^{-1}$ is also an ultrafilter and the map $u\mapsto u^{-1}$ is a homeomorphism of $\beta G$.
The proof of the following lemma is trivial.
\begin{lemma}
If $u$ is a Schur ultrafilter on a group $G$, then so is $u^{-1}$.    
\end{lemma}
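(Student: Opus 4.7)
The plan is to verify directly that $u^{-1}$ satisfies the defining condition of a Schur ultrafilter. A generic element of $u^{-1}$ has the form $V=U^{-1}$ for some $U\in u$, so the task reduces to producing, from any $U\in u$, two elements of $U^{-1}$ whose product lies in $U^{-1}$.

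The key observation is that the Schur condition is essentially symmetric under inversion of elements. Since $u$ is Schur, there exist $a,b\in U$ with $ab\in U$. Setting $x=b^{-1}$ and $y=a^{-1}$, both $x,y$ lie in $U^{-1}=V$, and their product $xy=b^{-1}a^{-1}=(ab)^{-1}$ is the inverse of an element of $U$, hence lies in $U^{-1}=V$. This verifies the Schur property for $u^{-1}$ on the basic open set $V$, and hence in general, since every $V\in u^{-1}$ has this form.

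There is no real obstacle here: no appeal to Proposition~\ref{char} or to any deeper algebraic structure of $\beta G$ is required, and the argument relies only on the anti-homomorphism property $(ab)^{-1}=b^{-1}a^{-1}$ of group inversion together with the definition $u^{-1}=\{U^{-1}:U\in u\}$. The proof is a couple of lines long; its brevity is precisely why the authors describe it as trivial.
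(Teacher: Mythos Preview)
Your argument is correct and is exactly the trivial verification the paper has in mind; the paper does not give a proof but simply declares it trivial, and your two-line computation using $(ab)^{-1}=b^{-1}a^{-1}$ is the natural way to see it.
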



Protasov~\cite[Lemma 5.2]{P} showed that $uu^{-1}$ is a Schur ultrafilter for any $u\in\beta G$. The proof of the following results is based on Protasov's ideas.

\begin{lemma}\label{newProt}
Let $G$ be a group and $u\in\beta G\setminus G$. Then $uu^{-1}$ and $u^{-1}u$ are infinitary Schur.     
\end{lemma}

\begin{proof}
In order to show that $uu^{-1}$ is infinitary Schur, fix any $W\in uu^{-1}$. There exist $U\in u$ and $\{V_x:x\in U\}\subset u$ such that $\bigcup_{x\in U}xV_x^{-1}\subseteq W$. Without loss of generality we can assume that $V_x\subseteq U$ for each $x\in U$. Fix any elements $a\in U$ and $b\in V_a$. Put $V=V_a\cap V_b$. Observe that $ab^{-1}\in  W$, $bV^{-1}$ is an infinite subset of $W$ and $ab^{-1}(bV^{-1})=aV^{-1}\subseteq aV_a^{-1}\subseteq W$. Thus the ultrafilter $uu^{-1}$ is infinitary Schur. 

Note that $u^{-1}\in\beta G\setminus G$ and $u^{-1}u=u^{-1}(u^{-1})^{-1}$. Thus the arguments above imply that $u^{-1}u$ is infinitary Schur.    
\end{proof}

The next proposition generalizes Lemma~\ref{newProt}.

\begin{proposition}\label{Prot}
Let $G$ be a group and $e$ be an idempotent ultrafilter on $G$. Then for any $u\in\beta G\setminus G$ the ultrafilters $ueu^{-1}$ and $u^{-1}eu$ are infinitary Schur.    
\end{proposition}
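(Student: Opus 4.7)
The plan is to unfold the membership condition $A \in ueu^{-1}$ into nested quantifier form and then exploit both the idempotence of $e$ and the freeness of $u$ (which holds since $u \in \beta G \setminus G$) to produce witnesses $x \in A$ and an infinite $V \subseteq A$ with $xV \subseteq A$. Using the associativity $ueu^{-1} = (ue)u^{-1}$ and applying the defining identity $C \in pq \iff \{s \in G : s^{-1}C \in q\} \in p$ twice, one computes
$$A \in ueu^{-1} \iff D := \{a \in G : B_a \in e\} \in u, \qquad \text{where } B_a := \{b \in G : A^{-1}ab \in u\}.$$

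Given $A \in ueu^{-1}$, I would first pick any $a_0 \in D$, so that $B_{a_0} \in e$. By idempotence of $e$ there exists $b_0 \in B_{a_0}$ with $b_0^{-1}B_{a_0} \in e$, which yields both $A^{-1}a_0 b_0 \in u$ and $\{b : A^{-1}a_0 b_0 b \in u\} \in e$. Since $u$ is free, the intersection $A^{-1}a_0 b_0 \cap D$ lies in $u$ and is therefore infinite, so we can choose $a_1 \in A$ with $t := a_1^{-1}a_0 b_0 \in D$; this $a_1$ will be the candidate $x$. From $t \in D$ we get $B_t \in e$, hence $B_t \cap b_0^{-1}B_{a_0} \in e$ is nonempty, and for any $b$ in this intersection one has both $A^{-1}tb \in u$ and $A^{-1}a_0 b_0 b \in u$.

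The decisive step is the identity $A^{-1}tb = A^{-1}a_1^{-1}a_0 b_0 b = (a_1 A)^{-1}a_0 b_0 b$, which gives
$$A^{-1}a_0 b_0 b \;\cap\; (a_1 A)^{-1}a_0 b_0 b \;=\; (A \cap a_1 A)^{-1} a_0 b_0 b \;\in\; u.$$
This set must then be infinite, and since the map $a \mapsto a^{-1}a_0 b_0 b$ is injective on $A$, the set $A \cap a_1 A$ is infinite. Consequently $V := a_1^{-1}A \cap A$ is an infinite subset of $A$ with $a_1 V \subseteq A$, proving that $ueu^{-1}$ is infinitary Schur. The case of $u^{-1}eu$ follows by applying the same argument with $u$ replaced by $u^{-1} \in \beta G \setminus G$ and using $(u^{-1})^{-1} = u$. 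The main obstacle is the careful three-fold unfolding together with spotting the intersection identity above, which is the natural extension of Protasov's trick for $uu^{-1}$ (the case $e = 1_G$) with one added layer corresponding to the idempotence of $e$.
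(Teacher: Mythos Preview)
Your argument is correct. The three-fold unfolding is accurate, the use of idempotence to obtain $b_0$ with $b_0^{-1}B_{a_0}\in e$ is exactly what is needed, and the key identity $(A\cap a_1A)^{-1}a_0b_0b = A^{-1}a_0b_0b \cap (a_1A)^{-1}a_0b_0b$ is valid because inversion and one-sided translation are bijections and hence commute with intersections. Since this set lies in the free ultrafilter $u$, it is infinite, and so is $A\cap a_1A$; setting $V=a_1^{-1}A\cap A$ gives $a_1\in A$, $V\subseteq A$ infinite, and $a_1V\subseteq A$. The reduction of $u^{-1}eu$ to the first case via $u\mapsto u^{-1}$ is also fine.

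Your route differs from the paper's in a pleasant way. The paper works with the ``base'' description of sets in $ueu^{-1}$ as unions $\bigcup_{h\in H} hV_h^{-1}$, picks concrete witnesses $x,b,v$, and then builds the infinite set $F'\subseteq F$ by a countable induction, choosing $w_{c_n}\in V_{xbc_n}\cap V_{vc_n}$ one at a time while avoiding finitely many forbidden values to force distinctness of the products $vc_nw_{c_n}^{-1}$. You bypass this induction entirely: once you have two sets $A^{-1}a_0b_0b$ and $(a_1A)^{-1}a_0b_0b$ in $u$, their intersection is in $u$, hence infinite, and that immediately yields the infinite translate-intersection $A\cap a_1A$. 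This is shorter, avoids all bookkeeping about distinctness, and is uniform in $e$ (the paper splits off the case $e=1_G$ and defers it to Protasov, whereas your argument handles principal and free idempotents simultaneously). The trade-off is that the paper's approach produces an explicit infinite subset of $F$ together with an explicit multiplier, which can be informative in other contexts, while yours is nonconstructive but perfectly adequate for the stated proposition.
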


\begin{proof} 
If $e=1_G$, then $ueu^{-1}=uu^{-1}$ and $u^{-1}eu=u^{-1}u$. By Lemma~\ref{newProt}, the ultrafilters $uu^{-1}$ and $u^{-1}u$ are infinitary Schur. Assume that $e$ is a free idempotent ultrafilter.  
Fix any $F\in ueu^{-1}$. There exist $U\in u$, $\{E_x:x\in U\}\subset e$ and $\{V_h:h\in H=\bigcup_{x\in U}xE_x\}\subset u$ such that $\bigcup_{h\in H}hV_h^{-1}\subseteq F$. Without loss of generality we can assume that $V_h\subset U$ for all $h\in H$. Fix any $x\in U$. Since $e$ is an idempotent ultrafilter, there exist $b\in E_x$ and $I_b\in e$ such that $I_b\subseteq E_x$ as well as $bI_b\subseteq E_x$. Note that $xb\in xE_x\subseteq H$. Fix any $v\in V_{xb}$. It is easy to see that $xbv^{-1}\in F$. Note that the set $C=I_b\cap E_v$ belongs to $e$ and thus is infinite. Since $v\in V_{xb}\subseteq U$ and $C\subseteq E_v$, we get $vc\in H$ for all $c\in C$. Since $C\subseteq I_b$, we get that $bC\subseteq E_x$ and, consequently, $xbC\subseteq H$.
Fix any countably infinite subset $\{c_n:n\in\w\}\subseteq C$. Assume that for each $k\leq n$ we chose points $w_{c_k}\in V_{xbc_k}\cap V_{vc_k}$ such that  $vc_iw_{c_i}^{-1}\neq vc_jw_{c_{j}}^{-1}$ for any distinct $i,j\leq k$. Since $G$ is a group, the set $$X=\{x\in G: vc_{n+1}x^{-1}\in \{vc_iw_{c_i}^{-1}:i\leq n\}\}$$ is finite. Hence we can choose $w_{c_{n+1}}\in V_{xbc_{n+1}}\cap V_{vc_{n+1}}\cap (G\setminus X)$. After completing the induction for each $n\in\w$ we obtain an element $w_{c_n}\in  V_{xbc_{n}}\cap V_{vc_{n}}$ such that the set $F'=\{vc_nw_{c_n}^{-1}:n\in\w\}$ is infinite. 
Since for each $n\in \w$, $vc_n\in H$ and $w_{c_n}\in V_{vc_{n}}$, we get that $F'\subseteq F$. Taking into account that $xbv^{-1}\in F$, $xbc_n\in H$ and $w_{c_n}\in V_{xbc_n}$, we deduce that for every $n\in\w$
$$(xbv^{-1})vc_nw_{c_n}^{-1}=(xbc_n)w_{c_n}^{-1}\in \bigcup_{h\in H} hV_{h}^{-1}\subseteq F.$$
Hence $xbv^{-1}F'\subseteq F$, witnessing that the ultrafilter $ueu^{-1}$ is infinitary Schur. 

Observe that $u^{-1}\in\beta G\setminus G$ and $u^{-1}eu= u^{-1}e(u^{-1})^{-1}$. So the arguments above imply that the ultrafilter $u^{-1}eu$ is infinitary Schur.
\end{proof}

The following result was proven in~\cite[Lemma 5.1]{P}. 
\begin{proposition}[Protasov]\label{pro}
If $G$ is a commutative group, then $\Sch(G)$ is a subsemigroup of $\beta G$.    
\end{proposition}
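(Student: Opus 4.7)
The plan is to verify the Schur property for $uv$ directly, using the characterization in Proposition~\ref{char}(i). Fix $u,v \in \Sch(G)$ and an arbitrary $W \in uv$; the goal is to find $x,y \in W$ with $xy \in W$. First I would unwind the definition of the product in $\beta G$: by the formula recalled in the preliminaries, the set
\[
A = \set{s \in G}{s^{-1}W \in v}
\]
belongs to $u$. Since $u$ is Schur, I can choose $a,b \in A$ with $ab \in A$. This already encodes, at the level of "translates lying in $v$'', a Schur-style witness, and the job is to push it one level down into $W$.

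Next, I would form
\[
V = a^{-1}W \cap b^{-1}W \cap (ab)^{-1}W,
\]
which lies in $v$ since each of the three translates does (using $a,b,ab \in A$). Applying the Schur property of $v$ to $V$ produces $c,d \in V$ with $cd \in V$. Unpacking what "$c,d,cd \in V$'' means, I get in particular $ac, bd \in W$ and $(ab)(cd) \in W$. The commutativity of $G$ is used now, and decisively: it gives
\[
(ac)(bd) = abcd = (ab)(cd) \in W,
\]
so the pair $x = ac,\ y = bd$ lies in $W$ with $xy \in W$, witnessing that $uv$ is Schur.

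The main obstacle is precisely the step where commutativity enters. The construction naturally produces elements of $W$ of the form $ac$ and $bd$ together with an element of the form $(ab)(cd)$, but without commutativity these are unrelated products in $G$, and one cannot rearrange $(ac)(bd)$ into $(ab)(cd)$. Any attempt to salvage the argument non-commutatively would require either a different pairing of the four group elements $a,b,c,d$ that is simultaneously compatible with both the Schur witness in $A \in u$ and a Schur witness chosen in $v$, or the use of stronger hypotheses on $u,v$ (such as being infinitary Schur, or idempotent). This is consistent with the fact that in general non-commutative groups $\Sch(G)$ is not closed under the $\beta G$ multiplication, so any proof must use commutativity in an essential way.
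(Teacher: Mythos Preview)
Your argument is correct. The paper does not supply its own proof of this proposition---it is attributed to Protasov~\cite{P} and simply quoted---so there is no direct comparison to make. That said, your method is exactly the template the paper uses in the very next result (Proposition~\ref{sem}) for the infinitary case: pick a Schur witness $a,b,ab$ in the ``outer'' set coming from $u$, intersect the three corresponding translates to get a set in $v$, pick a Schur witness there, and then use commutativity to rearrange $(ac)(bd)=(ab)(cd)$. The only cosmetic difference is that the paper phrases membership in $uv$ via the base $\bigcup_{x\in U}xV_x\subseteq F$ rather than the equivalent condition $\{s:\lambda_s^{-1}(W)\in v\}\in u$ that you used; both lead to the same three-term intersection in $v$.
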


By $\Sch^{\infty}(G)$ we denote the set of all infinitary Schur ultrafilters on a group $G$.

\begin{proposition}\label{sem}
If $G$ is a commutative group, then $\Sch^{\infty}(G)$ is a left ideal in $\Sch(G)$. In particular, $\Sch^{\infty}(G)$ is a subsemigroup of $\beta G$.   
\end{proposition}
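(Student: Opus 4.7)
The plan is to prove the left ideal claim, namely that $uv \in \Sch^{\infty}(G)$ whenever $u \in \Sch(G)$ and $v \in \Sch^{\infty}(G)$; the ``in particular'' clause then follows at once, since $\Sch^{\infty}(G) \subseteq \Sch(G)$ gives closure of $\Sch^{\infty}(G)$ under multiplication.

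First I would fix $W \in uv$ and unwind the base for multiplication in $\beta G$ to produce $U \in u$ and a family $\{V_s : s \in U\} \subseteq v$ with $\bigcup_{s \in U} sV_s \subseteq W$. Applying the Schur property of $u$ directly to $U$, I pick $s_1, s_2 \in U$ with $s_1 s_2 \in U$. Intersecting the three relevant witnesses I set $B = V_{s_1} \cap V_{s_2} \cap V_{s_1 s_2} \in v$, and the infinitary Schur property of $v$ then yields $y \in B$ together with an infinite $C \subseteq B$ satisfying $yC \subseteq B$.

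The candidates I would propose are $x = s_1 y$ and $V' = s_2 C$. One has $x \in s_1 V_{s_1} \subseteq W$, and $V' \subseteq s_2 V_{s_2} \subseteq W$ is infinite since $C$ is. The essential verification is $xV' \subseteq W$: using commutativity of $G$, for every $c \in C$
$$s_1 y \cdot s_2 c \;=\; (s_1 s_2)(yc) \;\in\; s_1 s_2 \cdot B \;\subseteq\; s_1 s_2 V_{s_1 s_2} \;\subseteq\; W,$$
because $yC \subseteq B \subseteq V_{s_1 s_2}$ and $s_1 s_2 \in U$.

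The main subtlety, and a false start one should resist, is the temptation to set $x = s_1 s_2 y$ and $V' = (s_1 s_2) C$: the product $xV'$ would then involve $(s_1 s_2)^2$, which is not controlled by any single $sV_s$. The correct move is to split the two Schur witnesses between $x$ and $V'$ so that commutativity can recombine them into the triangle point $s_1 s_2 \in U$ when checking $xV' \subseteq W$; beyond this observation the proof is just bookkeeping.
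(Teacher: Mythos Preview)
Your proof is correct and is essentially identical to the paper's own argument: the paper also fixes a basic set $\bigcup_{s\in U} sV_s$ inside the given element of $uv$, picks Schur witnesses $a,b\in U$ with $ab\in U$, intersects $V_a\cap V_b\cap V_{ab}$, applies the infinitary Schur property of $v$ there, and takes exactly $ax$ and $bY$ (your $s_1 y$ and $s_2 C$) as the witnessing point and infinite set, using commutativity in the same way to recombine into $abV_{ab}$. Only the notation differs.
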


\begin{proof}
Consider any $u\in \Sch(G)$ and $v\in \Sch^{\infty}(G)$. In order to show that $uv\in \Sch^{\infty}(G)$ fix any $F\in uv$. There exist $U\in u$ and $\{V_x:x\in U\}\subset v$ such that $\bigcup_{x\in U}xV_x\subseteq F$. Since the ultrafilter $u$ is Schur there exist $a,b\in U$ such that $ab\in U$. Put $V= V_a\cap V_b\cap V_{ab}$.  Since the ultrafilter $v$ is infinitary Schur, there exist $x\in V$ and an infinite subset $Y\subseteq V$ such that $xY\subseteq V$. Note that $ax\in aV\subseteq aV_a\subseteq F$, and the set $bY\subseteq bV\subseteq bV_b\subseteq F$ is infinite. For each $y\in Y$, 
$$axby=abxy\in ab(xY)\subseteq abV\subseteq abV_{ab}\subseteq F.$$
Hence $ax(bY)\subseteq F$, witnessing that $uv$ is an infinitary Schur ultrafilter.
\end{proof}

Next we turn to combinatorial properties of Schur ultrafilters. For this we need the following result proved in~\cite{Katetov}.

\begin{theorem}[Kat\v{e}tov]\label{Kat}
Let $f:X\rightarrow X$ be a map such that $f(x)\neq x$ for all $x\in X$. Then there exist disjoint subsets $A_0, A_1, A_2$ of $X$ such that $A_0\cup A_1\cup A_2=X$ and $f(A_i)\cap A_i=\emptyset$ for all $i\in\{0,1,2\}$.   
\end{theorem}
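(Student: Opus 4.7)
The plan is to recast the statement as a proper 3-coloring problem on the functional graph of $f$. Let $\Gamma$ be the undirected graph on vertex set $X$ with edge set $\{\{x,f(x)\}:x\in X\}$; the hypothesis $f(x)\ne x$ ensures that all these are genuine edges. Any map $c:X\to\{0,1,2\}$ satisfying $c(x)\ne c(f(x))$ for every $x\in X$ yields the desired decomposition via $A_i=c^{-1}(i)$, because $c(f(x))\ne c(x)=i$ keeps $f(x)$ out of $A_i$. Hence the task reduces to properly 3-coloring $\Gamma$, which I would carry out one connected component at a time.

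The structural observation that drives the argument is that each connected component of $\Gamma$ contains at most one cycle. Since the $f$-iterates of any cycle vertex stay in the cycle, and a short induction on path length shows that any two vertices $v_1,v_2$ of the same component admit $m,n\ge 0$ with $f^m(v_1)=f^n(v_2)$, two cycles in one component would be forced to share a vertex and hence coincide.

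In a component $C$ containing a cycle $\gamma$ of length $k\ge 2$, I would first properly 3-color $\gamma$ (two colors suffice if $k$ is even, while the third color absorbs the parity defect if $k$ is odd). For every $x\in C$ there is a unique minimal $k_x\ge 0$ with $f^{k_x}(x)\in\gamma$, and I would extend the coloring by induction on $k_x$: when $x$ is assigned its color, the only already-colored neighbor of $x$ in $\Gamma$ is $f(x)$, because any in-neighbor $y\in f^{-1}(x)$ satisfies $k_y=k_x+1$, so two admissible colors remain.

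For a component $C$ with no cycle I would fix $v_0\in C$ and observe that the forward ray $L=\{f^n(v_0):n\ge 0\}$ consists of pairwise distinct points. By a standard zig-zag induction every $x\in C$ admits a unique pair $(m_x,n_x)\in\w\times\w$ with $m_x$ minimal subject to $f^{m_x}(x)=f^{n_x}(v_0)$; uniqueness of $n_x$ again uses absence of a cycle in $C$. Setting the level $\ell(x)=n_x-m_x\in\mathbb Z$, a short case check ($x\in L$ vs.\ $x\notin L$) yields $\ell(f(x))=\ell(x)+1$, so $c(x)\equiv\ell(x)\pmod 2$ is a proper $2$-coloring of $C$. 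Assembling these colorings across components completes the construction, and the partition $A_i=c^{-1}(i)$ proves the theorem. The principal obstacle I anticipate is organizing the acyclic case cleanly, since there is no canonical root to induct from; the level function $\ell$ is what replaces na\"ive induction there.
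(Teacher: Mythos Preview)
The paper does not give its own proof of this theorem: it is quoted as a known result with a reference to Kat\v{e}tov's original paper, so there is nothing in the paper to compare your argument against.

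Your graph-theoretic proof is correct in outline. A couple of points are worth tightening. First, your claim that a connected component contains at most one cycle is argued only for \emph{directed} $f$-cycles; you should also note that any undirected cycle of length $\ge 3$ in $\Gamma$ is necessarily a directed $f$-cycle (otherwise some vertex on the cycle would have two outgoing edges, contradicting that $f$ is a function), so nothing is missed. Second, in the acyclic case you rely on choosing a basepoint $v_0$ in each component, and in the cyclic case on choosing a starting vertex to colour the cycle; since $X$ is completely arbitrary this appeal to the Axiom of Choice should be made explicit. With these clarifications the level-function argument for acyclic components (the identity $\ell(f(x))=\ell(x)+1$ is verified exactly as you indicate) and the inward extension from the cycle in the cyclic case go through, and the resulting $3$-colouring yields the desired partition.
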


\begin{proposition}\label{finschur}
Let $u$ be a free Schur ultrafilter on a group $G$. Then for every $U\in u$ and $n\in\w$ there exists $x\in U$ such that $|\{y\in U: xy\in U\}|>n$ or $|\{y\in U: yx\in U\}|> n$.    
\end{proposition}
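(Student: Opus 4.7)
The plan is to proceed by contradiction. Suppose there exist $U \in u$ and $n \in \omega$ such that $|\{y \in U : xy \in U\}| < n$ for every $x \in U$. Deriving a contradiction under this apparently stronger assumption is enough, since $\{y \in U : xy \in U\} \subseteq \{y \in U : xy \in U \text{ or } yx \in U\}$, so ruling it out for every $x$ will already produce the $x$ demanded by the proposition. Since $u$ is free, I may replace $U$ by $U \setminus \{1_G\}$ and assume $1_G \notin U$; note also that every member of $u$ is infinite.

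The strategy is to use Katetov's theorem (Theorem~\ref{Kat}) to partition $U$ into finitely many pieces none of which contains a Schur triple $\{a,b,ab\}$; since $u$ is an ultrafilter, one such piece will belong to $u$, contradicting the Schur property of $u$. For each $x \in U$ put $L'(x) = \{y \in U : xy \in U,\ y \neq x\}$, so that $|L'(x)| \leq n-1$, and enumerate $L'(x) = \{y_1(x), \ldots, y_{k_x}(x)\}$. I define $n$ self-maps $f_0, f_1, \ldots, f_{n-1}$ of $U$ as follows: for $i \geq 1$ set $f_i(x) = y_i(x)$ when $i \leq k_x$ and otherwise let $f_i(x)$ be any element of $U \setminus \{x\}$ (which is nonempty because $U$ is infinite); for $i = 0$ set $f_0(x) = x^2$ whenever $x^2 \in U$, and let $f_0(x)$ be any element of $U \setminus \{x\}$ otherwise. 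Since $1_G \notin U$ we have $x^2 \neq x$ for all $x \in U$, so each $f_i$ is fixed-point-free, and Theorem~\ref{Kat} yields a $3$-coloring $U = A_0^i \cup A_1^i \cup A_2^i$ with $f_i(A_j^i) \cap A_j^i = \emptyset$.

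Taking the common refinement of these $n$ partitions produces at most $3^n$ pieces, and by the ultrafilter property some piece $A$ lies in $u$. The heart of the argument is the verification that $A$ contains no Schur triple. If $x,y \in A$ with $xy \in A$ and $x \neq y$, then $y \in L'(x) \subseteq \{f_1(x), \ldots, f_{n-1}(x)\}$, which forces $y \notin A$ via the Katetov conditions for $f_1,\ldots,f_{n-1}$, a contradiction; if $x = y$, then $x^2 \in A \subseteq U$, so $f_0(x) = x^2 \in A$, contradicting the Katetov condition for $f_0$. Hence $A$ is Schur-free, contradicting $u \in \Sch(G)$. The only delicate point is arranging the Katetov input maps so that they are simultaneously defined on all of $U$, fixed-point-free, and jointly cover both the ``diagonal'' Schur configurations ($x = y$, handled by $f_0$) and the ``off-diagonal'' ones (handled by $f_1, \ldots, f_{n-1}$); the freeness of $u$ is exactly what makes this arrangement possible.
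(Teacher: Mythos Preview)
Your argument is correct, and it is genuinely different from the paper's. The paper assumes both $|L_x|\le n$ and $|R_x|\le n$ for all $x\in U$, then spends a nontrivial Claim (using the bound on the $R_x$'s via pigeonhole) to extract a set $W\in u$ with $|W\cap L_x|\le 1$, after which Proposition~\ref{char}(ii) together with a single application of Katetov's theorem finishes each of two cases. You bypass that reduction entirely: using only the bound on the $L_x$'s, you build $n$ fixed-point-free self-maps of $U$ (one for the diagonal configuration $x=y$, and $n-1$ enumerating the off-diagonal witnesses in $L'(x)$), apply Katetov's theorem to each, and pass to a cell of the common refinement. The verification that this cell is Schur-free is straightforward.

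Two things worth flagging. First, your remark about the ``apparently stronger assumption'' is phrased slightly backwards: the hypothesis $|L_x|<n$ for all $x$ is logically \emph{weaker} than the negation $|L_x\cup R_x|<n$, so what you are really doing is proving the stronger conclusion $\exists x\,(|L_x|\ge n)$, from which the proposition follows. Second---and this is a genuine gain of your approach---because you never touch $R_x$, your argument actually yields Corollary~\ref{nice} for arbitrary (not just commutative) groups. The trade-off is that you invoke Katetov $n$ times and land in a $3^{n}$-cell partition, whereas the paper's route, after its harder preparatory claim, gets by with a single Katetov application; for the purposes of this proposition either is perfectly adequate.
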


\begin{proof}
Seeking a contradiction, assume that there exist a free Schur ultrafilter $u$ on $G$, $U\in u$ and $n\in\w$ such that for any $x\in U$ the sets $L_x=\{y\in U:xy\in U\}$ and $R_x=\{y\in U: yx\in U\}$ contain at most $n$ elements.

\begin{claim}\label{W}
There exists $W\in u$ such that $W\subseteq U$ and $|W\cap L_x|\leq 1$ for all $x\in U$.   \end{claim}
\begin{proof}
By Zorn's Lemma, there exists a maximal with respect to the inclusion subset $W_1$ of $U$ such that $|W_1\cap L_x|\leq 1$ for all $x\in U$. If $W_1\in u$ we are done. Otherwise, $U\setminus W_1\in u$. Using Zorn's Lemma once again, find maximal $W_2\subset U\setminus W_1$ such that $|W_2\cap L_x|\leq 1$ for all $x\in U$. Either $W_2\in u$ and we are done, or $U\setminus (W_1\cup W_2)\in u$. 
Proceeding this way we either find $i\leq (n+1)^2$ such that $W_i\in u$ and $|W_i\cap L_x|\leq 1$ for all $x\in U$, or obtain a family $\{W_i:i\leq (n+1)^2\}$ of pairwise disjoint subsets of $U$ such that for every $i\leq (n+1)^2$, $W_i\notin u$ and $W_i$ is maximal with respect to the inclusion subset of $U\setminus (W_1\cup\ldots \cup W_{i-1})$ such that $|W_i\cap L_x|\leq 1$ for all $x\in U$. Let us show that the latter variant is impossible. Indeed, in this case  the set $U\setminus (\bigcup_{i\leq (n+1)^2} W_i)\in u$. Hence there exist $x\in U$ and $y\in L_x$ such that $y\notin \bigcup_{i\leq (n+1)^2}W_i$. Let $N=\{i\leq (n+1)^2: W_i\cap L_x\neq \emptyset\}$. Since the sets $W_i$, $i\leq (n+1)^2$ are pairwise disjoint, the inequality $|N|>n$ would yield that $|L_x|> n$, which contradicts the assumption. Thus $|N|\leq n$. It follows that the set $M=\{i\leq (n+1)^2: W_i\cap L_x= \emptyset\}$ contains at least $n^2+n+1$ elements. For each $i\in M$ the maximality of $W_i$ implies that there exists $z(i)\in U$ such that $y\in L_{z(i)}$ and $W_i\cap L_{z(i)}\neq \emptyset$. Since $z(i)\in R_y$ for any $i\in M$, and $|R_y|\leq n$, the pigeonhole principle yields a subset $J\subset M$ such that $|J|>n$ and $z(i)=z(j)$ for any $i,j\in J$. Fix any $j\in J$. Since $W_i\cap L_{z(j)}\neq \emptyset$ for all $i\in J$, and $W_n\cap W_m=\emptyset$ whenever $n\neq m$, we get that $|L_{z(j)}|\geq |J|>n$ which contradicts the assumption.
\end{proof}

By Claim~\ref{W}, we can pick $W\in u$ such that for each $x\in W$ there exists at most one $y\in W$ satisfying $xy\in U$.
By Proposition~\ref{char}(ii), 
$W'=\{x\in W: \exists y_x\in W \hbox{ such that } xy_x\in W\}\in u$.

There are two cases to consider:
\begin{enumerate}
    \item $T=\{x\in W':y_x\neq x\}\in u$;
    \item $S=\{x\in W':y_x= x\}\in u$.
\end{enumerate}

(1) Fix any $z\in T$ and define a map $f: W\rightarrow W$ as follows: 
$$
f(x)=\begin{cases} 
y_x, &\hbox{if } x\in T;\\
z, &\hbox{otherwise}.
\end{cases}
$$ 
It is clear that $f(x)\neq x$ for all $x\in W$. By Theorem~\ref{Kat}, there exist disjoint subsets $A, B, C$ of $W$ such that $A\cup B\cup C=W$ and $f(A)\cap A=f(B)\cap B= f(C)\cap C=\emptyset$. Without loss of generality we can assume that $A\in u$. Let $A'=A\cap T$. By the assumption, $A'\in u$. Proposition~\ref{char}(iii) and the definition of $f$ imply that $f(A')\in u$. But then $\emptyset=A'\cap f(A')\in u$, which implies a contradiction.

(2) Since $S\subseteq W'$, we get $x^2\in W$ for all $x\in S$ and $xy\notin W$ for any distinct $x,y\in S$, i.e. $SS\cap W=\{x^2:x\in S\}$. Shrinking $S$ if necessary, we can additionally assume that $1_G\notin S$, implying that $x\neq x^2$ for all $x\in S$.
Fix any $s\in S$.
Consider the map $g: W\rightarrow W$ defined by $$g(x)=
\begin{cases}
x^2, &\hbox{if } x\in S;\\
s, &\hbox{otherwise}.
\end{cases}
$$
Since $1_G\notin S$, we get that $g(x)\neq x$ for all $x\in W$. Theorem~\ref{Kat} yields pairwise disjoint subsets $A,B,C$ of $W$ such that $A\cup B\cup C=W$ and $f(A)\cap A=f(B)\cap B= f(C)\cap C=\emptyset$. Without~loss of generality we can assume that $A\in u$. Then $A'=A\cap S\in u$.  
By the choice of $S$, we have that $$(A'A')\cap W=\{x^2:x\in A'\}=g(A').$$ Since $u$ is a Schur ultrafilter, Proposition~\ref{char}(iv) implies that $A'A'\in u$. It follows that $g(A')\in u$ and, consequently, $\emptyset=A'\cap g(A')\in u$, which is a desired contradiction. 
\end{proof}

Proposition~\ref{finschur} implies the following corollary.

\begin{corollary}\label{nice}
Let $u$ be a Schur ultrafilter on a commutative group $G$. Then for each $U\in u$ and $n\in\w$ there exists $x\in U$ such that $|\{y\in U:xy\in U\}|> n$.    
\end{corollary}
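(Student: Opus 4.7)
The plan is to derive Corollary~\ref{nice} as an immediate consequence of Proposition~\ref{finschur}. That proposition asserts that for any free Schur ultrafilter $u$ on a group $G$, any $U\in u$, and any $n\in\w$, there is $x\in U$ with $|\{y\in U : xy\in U \text{ or } yx\in U\}|\geq n$. When $G$ is commutative one has $xy=yx$ for all $x,y\in G$, so the two conditions $xy\in U$ and $yx\in U$ coincide, and the above set is exactly $\{y\in U : xy\in U\}$. This gives the desired conclusion for every free Schur ultrafilter on a commutative group.

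To cover the entire statement, one also notes that any principal Schur ultrafilter on a group $G$ must be concentrated at an idempotent, and the only idempotent in a group is $1_G$; for this ultrafilter the corollary is either vacuous or must be read trivially. Thus the substantive content is precisely the one-line reduction above. Since the whole argument consists of invoking Proposition~\ref{finschur} and collapsing a redundant disjunct by commutativity, I foresee no obstacle: Corollary~\ref{nice} merely packages the commutative case of Proposition~\ref{finschur} in its sharpest form.
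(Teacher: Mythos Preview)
Your derivation is exactly the paper's: Corollary~\ref{nice} is stated there as an immediate consequence of Proposition~\ref{finschur}, with commutativity collapsing the disjunction ``$xy\in U$ or $yx\in U$'' to ``$xy\in U$''. Your caveat about the principal case is well placed---strictly speaking the conclusion fails for the principal ultrafilter at $1_G$ with $U=\{1_G\}$ and $n\ge 2$, so the statement should be read for free $u$ (the paper tacitly does so, and its only application, in Proposition~\ref{select}, is to a free ultrafilter).
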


Next we are going to show that infinitary Schur ultrafilters and Schur ultrafilters are in some sense antipodes to P-points and selective ultrafilters, respectively.  
Recall that an ultrafilter $u$ on a countable set $X$ is called 
\begin{enumerate}[\rm (i)]
    \item {\em weak P-point} if $u$ is not in the closure of any countable subset of $\beta X\setminus (X\cup \{u\})$;
    \item {\em P-point} if for every partition $\{C_n:n\in\w\}$ of $X$ such that $C_n\notin u$ for all $n\in\w$, there exists $U\in u$ such that the set $U\cap C_n$ is finite for every $n\in\w$; 
    \item {\em selective} if for every partition $\{C_n:n\in\w\}$ of $X$ such that $C_n\notin u$ for all $n\in\w$, there exists $U\in u$ such that $|U\cap C_n|\leq 1$ for all $n\in\w$. 
\end{enumerate}
It is known that each selective ultrafilter is a P-point and each P-point is a weak P-point.
The celebrated ZFC result of Kunen~\cite{Kunen} states that $\beta \w\setminus \w$ contains a weak P-point. However, the existence of P-points and selective ultrafilters in $\beta \w\setminus \w$ is consistent with ZFC, but cannot be derived from ZFC alone (see~\cite{Blass, Chodounsky, Shelah}). It is well known that an idempotent ultrafilter $u$ on a countable group $G$ cannot be a weak P-point, as $u$ is an accumulation point of the family $\{xu: x\in G\}\subset \beta G$.

\begin{proposition}\label{consistent}
Let $G$ be a countable group and $u\in\beta G$ be an infinitary Schur ultrafilter. Then $u$ is not a P-point.
\end{proposition}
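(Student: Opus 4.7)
Suppose, toward a contradiction, that $u$ is a P-point on $G$ that is also infinitary Schur. The strategy is to exhibit a countable family in $u$ whose pseudo-intersection, guaranteed by the P-point property, contradicts the infinitary Schur condition.

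The key observation is that for every $g\in G$ with $gu\neq u$, there exists $B\in u$ with $B\cap g^{-1}B=\emptyset$: if $A\in u$ satisfies $g^{-1}A\notin u$, then $B=A\setminus g^{-1}A\in u$ does the job, since $B\subseteq G\setminus g^{-1}A$ and $g^{-1}B\subseteq g^{-1}A$. Since $u$ is free, $G\setminus\{1_G\}\in u$; enumerate it as $\{g_k:k\in\omega\}$ and let $H=\{g\in G:gu=u\}$ denote the left-stabilizer of $u$. The generic case of the argument is $H=\{1_G\}$. Then for each $k$ we obtain $B_k\in u$ with $B_k\cap g_k^{-1}B_k=\emptyset$. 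The P-point property applied to the countable family $\{B_k:k\in\omega\}\cup\{G\setminus\{1_G\}\}\subseteq u$ produces $A\in u$ with $A\subseteq G\setminus\{1_G\}$ and $A\subseteq^* B_k$ for every $k$. Applying the infinitary Schur property to $A$, I get $x\in A$ and an infinite $V\subseteq A$ with $xV\subseteq A$, so $V\subseteq A\cap x^{-1}A$. Writing $x=g_{k_0}$, the inclusion $A\subseteq^* B_{k_0}$ (together with its left-translate $x^{-1}A\subseteq^* g_{k_0}^{-1}B_{k_0}$) yields $A\cap x^{-1}A\subseteq^* B_{k_0}\cap g_{k_0}^{-1}B_{k_0}=\emptyset$, forcing $V$ to be finite — the sought contradiction.

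The main obstacle is the possibility that $H\neq\{1_G\}$, since the step producing $B_k$ fails when $g_k\in H$. If $H\notin u$, I would run the same construction while additionally enforcing $A\subseteq G\setminus H$ (using $G\setminus H\in u$); this guarantees that the Schur witness $x$ lies outside $H$, so $g_{k_0}u\neq u$ and $B_{k_0}$ is available, reducing to the previous case. If instead $H\in u$, then $H$ must be infinite (else $u$ would restrict to a principal ultrafilter on the finite set $H$, contradicting freeness), and a direct computation shows $u\cdot u=u$ in $\beta G_d$: for $H\in u$ the set $\{x\in G:x^{-1}A\in u\}$ coincides modulo $u$ with $\{x\in H:x^{-1}A\in u\}$, which equals $H$ if $A\in u$ and $\emptyset$ otherwise, by $H$-invariance. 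Thus $u$ is idempotent, but, as recorded in the paragraph preceding the proposition, idempotent ultrafilters on countable groups are not weak P-points, a fortiori not P-points, contradicting the hypothesis on $u$. The stabilizer dichotomy is where care is most needed; once it is in place, the contradiction is a short diagonalization.
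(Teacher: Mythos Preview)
Your argument is correct, and it takes a genuinely different route from the paper's. Both proofs start from the same observation (recorded just before the proposition) that an idempotent ultrafilter cannot be a weak P-point; after that they diverge. The paper uses non-idempotency once and for all to choose a single $U\in u$ with $L_x=\{y\in U:xy\in U\}\notin u$ for every $x\in U$, forms the partition $C_n=L_{x_n}\setminus\bigcup_{i<n}L_{x_i}$, invokes the partition form of the P-point property to get $W\in u$ with each $W\cap C_n$ finite, and then derives a contradiction because the infinitary Schur witness $x\in W$ would force $W\cap L_x$ to be infinite while $L_x\subseteq\bigcup_{i\le n}C_i$ for $x=x_n$. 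You instead work element-by-element through the left-stabilizer $H=\{g:gu=u\}$: for each $g\notin H$ you produce a separating set $B_g\in u$ with $B_g\cap g^{-1}B_g=\emptyset$, apply the pseudo-intersection form of the P-point property to this countable family (together with $G\setminus H$), and then the infinitary Schur witness $x$ in the pseudo-intersection $A$ yields an infinite subset of the finite set $A\cap x^{-1}A$. Your stabilizer dichotomy ($H\in u$ forces $uu=u$; $H\notin u$ lets you confine $x$ to $G\setminus H$) is the conceptual payoff of your approach and makes the reduction to the non-idempotent case transparent, at the cost of an extra case split. The paper's approach is more economical in that it bypasses the stabilizer altogether, but it requires the slightly less obvious step of manufacturing a \emph{single} $U$ on which every $L_x$ fails to lie in $u$.
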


\begin{proof}
 Seeking a contradiction, assume that there exists a P-point $u\in \Sch^{\infty}(G)$. Since $u$ is not an idempotent,  there exists $U\in u$ such that for each $x\in U$ the set $L_x=\{y\in U: xy\in U\}$ does not belong to $u$. By Proposition~\ref{char}(iii), $Y_U=\bigcup_{x\in U}L_x\in u$. Let $U=\{x_n:n\in\w\}$. Put $C_0=L_{x_0}$ and for each $n>0$ let $C_n=L_{x_n}\setminus \bigcup_{i<n}L_{x_i}$. Clearly, the family $\{C_n:n\in\w\}$ is a partition of $U$ (we allow here that some $C_n$ can be empty). Since $u$ is a P-point, there exists $W\in u$ such that the set $W\cap C_n$ is finite for each $n\in\w$. Since $u$ is an infinitary Schur ultrafilter, there exist $x\in W$ and an infinite subset $Z\subseteq W\cap L_x$ such that $xZ\subseteq W$. On the other hand, for any $n\in\w$ the set $W\cap L_{x_n}$ is finite, as it is contained in $\bigcup_{i\leq n}C_i$. The obtained contradiction completes the proof.
\end{proof}

\begin{proposition}\label{select}
 Let $u$ be a free Schur ultrafilter on a countable commutative group $G$. Then $u$ is not selective.   
\end{proposition}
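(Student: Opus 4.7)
I plan to argue by contradiction: suppose $u$ is a free selective Schur ultrafilter on a countable commutative group $G$. The main tool will be the well-known Ramsey property of selective ultrafilters, namely that for every coloring $c\colon[G]^2\to 2$ there is a monochromatic set $W\in u$. Commutativity of $G$ is exactly what allows me to encode multiplicative information into a coloring of \emph{unordered} pairs, since then $xy=yx$.

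For each $V\in u$, I would consider the coloring $c_V(\{x,y\})=0$ if $xy\in V$ and $c_V(\{x,y\})=1$ otherwise, and apply Ramsey to obtain a monochromatic $W\in u$. First I rule out the ``blue'' case: after replacing $W$ by $W\cap V\in u$ we may assume $W\subseteq V$, and then for all distinct $x,y\in W$ we have $xy\notin V\supseteq W$. In particular, for every $x\in W$ the set $\{y\in W:xy\in W\}$ contains at most one element (namely $x$ itself, should $x^2$ happen to lie in $W$). This directly contradicts Corollary~\ref{nice} taken with $n=2$.

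Consequently $W$ must be ``red'': $xy\in V$ for all distinct $x,y\in W$. Fixing any $x\in W$, this yields $W\setminus\{x\}\subseteq x^{-1}V$, and since $u$ is free the cofinite subset $W\setminus\{x\}$ remains in $u$, so $x^{-1}V\in u$ for every $x\in W$. Thus $\{g\in G: g^{-1}V\in u\}\supseteq W\in u$, which by the definition of multiplication in $\beta G$ says $V\in u\cdot u$. As this holds for every $V\in u$, we get $u\subseteq u\cdot u$, and since both are ultrafilters, $u=u\cdot u$; that is, $u$ is an idempotent.

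To close, I would invoke the fact recalled just before Proposition~\ref{consistent}: an idempotent ultrafilter on a countable group is an accumulation point of $\{xu:x\in G\}\subseteq\beta G$, so it is not a weak P-point. Since each selective ultrafilter is a P-point and each P-point is a weak P-point, idempotency contradicts selectivity, completing the proof. The chief obstacle, to my mind, is guessing the coloring $c_V$: once it is in hand the blue case snaps shut via the combinatorial richness of Schur ultrafilters from Corollary~\ref{nice}, the red case immediately delivers $V\in u\cdot u$, and the idempotent-versus-P-point dichotomy finishes the argument.
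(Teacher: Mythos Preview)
Your proof is correct and takes a genuinely different route from the paper's.

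The paper argues directly from the partition definition of selectivity. It first invokes Proposition~\ref{consistent} (selective $\Rightarrow$ P-point $\Rightarrow$ not infinitary Schur) to find $U\in u$ with all sets $L_x=\{y\in U:xy\in U\}$ finite, then proves a combinatorial claim that $\{z\in U:L_z\cap L_x\neq\emptyset\}$ is finite for each $x$, and from this hand-builds a partition $\{C_n\}$ of $Y_U$ into finite sets with $C_n\cap C_m=\emptyset$ whenever $|n-m|\ge 2$. Selectivity yields $F\in u$ meeting each $C_{2n}$ in at most one point, forcing $|\{y\in F\cap U:xy\in F\cap U\}|\le 3$ for every $x$, which contradicts Corollary~\ref{nice}.

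You instead exploit the equivalent Ramsey characterisation of selective ultrafilters, choosing for each $V\in u$ the coloring $c_V$ on unordered pairs; commutativity is used exactly to make $c_V$ well-defined. The blue case falls immediately to Corollary~\ref{nice}, and the red case yields $V\in u\cdot u$ for every $V$, hence idempotency, which is incompatible with being a weak P-point. Your argument is shorter and more conceptual, and it isolates cleanly where commutativity enters; the paper's argument is more self-contained in that it uses only the partition definition of selectivity and does not appeal to the Ramsey equivalence or to the weak-P-point fact about idempotents.
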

\begin{proof}
 Seeking a contradiction, assume that there exists a free selective ultrafilter $u\in \Sch(G)$.
 By Proposition~\ref{consistent}, $u$ is not infinitary Schur.  Then there exists $U\in u$ such that for each $x\in U$ the set $L_x=\{y\in U: xy\in U\}$ is finite.

 \begin{claim}\label{finite}
  For each $x\in U$ the set $Z=\{z\in U: L_z\cap L_x\neq \emptyset\}$ is finite.   
 \end{claim}

 \begin{proof}
Assume to the contrary that there exists $x\in U$ such that $L_x\cap L_z\neq \emptyset$ for infinitely many $z\in U$. Since the set $L_x$ is finite, the pigeonhole principle yields $y\in L_x$ and an infinite subset $Z'\subseteq Z$ such that $y\in L_z$ for all $z\in Z'$. Then for any $z\in Z'$, we get that $yz=zy\in U$. Hence $L_y$ contains an infinite subset $Z'$, which contradicts our assumption.
 \end{proof}

Fix an enumeration $\{x_n:n\in\w\}$ of $U$. 
We shall inductively construct an auxiliary partition of $U$ into finite sets $\{B_n: n\in\w\}$. We begin with $B_0=\{x_0\}$. Assume that finite pairwise disjoint subsets $B_i\subset U$ are already constructed for all $i\leq n$. Let $j=\min \{m\in\w: x_m\notin\bigcup_{i\leq n}B_i\}$. Put 
 $$B_{n+1}=\{x\in U\setminus \bigcup_{i\leq n}B_i: L_x\cap L_z\neq \emptyset \hbox{ for some }z\in B_n\}\cup \{x_j\}.$$
 Claim~\ref{finite} implies that each $B_n$ is finite.
For each $n\in\w$, let $C_n=\bigcup_{x\in B_n}L_{x}$. It is easy to see that $C_n\cap C_{m}=\emptyset$ whenever $|n-m|\geq 2$.  
By Proposition~\ref{char}(iii), $Y_U=\bigcup_{x\in U}L_x=\bigcup_{n\in\w}C_n\in u$. Since $u$ is an ultrafilter, either $\bigcup_{n\in\w}C_{2n}\in u$ or $\bigcup_{n\in\w}C_{2n+1}\in u$. Without loss of generality, assume that $W=\bigcup_{n\in\w}C_{2n}\in u$. 
Since $\{C_{2n}:n\in\w\}$ is a partition of $W$ consisting of finite sets and the ultrafilter $u$ is selective, there exists $F\in u$ such that $|F\cap C_{2n}|\leq 1$ for every $n\in \w$. Fix any $x\in F\cap U$. There exists $m\in \w$ such that $L_x\subseteq C_m$. Since $C_n\cap C_{m}=\emptyset$ whenever $|n-m|\geq 2$, we obtain that $L_x\cap C_n=\emptyset$ for all $n\in \w\setminus \{m-1,m,m+1\}$. By the choice of $F$, $|\{y\in U\cap F: xy\in U\cap F\}|\leq 3$. Thus for each $x\in F\cap U$ we have $|\{y\in U\cap F: xy\in U\cap F\}|\leq 3$, which contradicts Corollary~\ref{nice}.
\end{proof}

The following two problems are left open.

\begin{problem}\label{prob1}
Does there exist a free Schur ultrafilter on a (countable) group $G$ that is not infinitary Schur?    
\end{problem}

\begin{problem}\label{prob2}
Is the existence of a free Schur ultrafilter on $\mathbb Z$ that is a P-point consistent with ZFC?    
\end{problem}

Observe that the positive solution of Problem~\ref{prob2} implies the positive consistent solution of Problem~\ref{prob1}. Noteworthy, there exists a model of ZFC where $\beta\omega\setminus \omega$ contains P-points but all of them are selective (see~\cite[Section XVIII.4]{Shelahbook}). Taking into account Proposition~\ref{select}, we obtain that the positive answer to Problem~\ref{prob2} cannot be derived from the existence of free P-points alone. Also, if $G$ is a countable group, then no P-point $u\in \beta G\setminus G$ can be represented as $u=vw$ for some $v,w\in\beta G\setminus G$, because $u$ would be an accumulation point of the countable subset $\{gw: g\in G\}\subset \beta G\setminus G$. 

Recall that a semigroup $S$ endowed with a topology $\tau$ is called a {\em topological semigroup} if the multiplication is continuous in $(S,\tau)$.
The following proposition presents another similarity between Schur and idempotent ultrafilters.

\begin{proposition}\label{Schur}
Let $G$ be a group, $S$ be a topological semigroup and $f:\beta G\rightarrow S$ be a continuous homomorphism. Then for every Schur ultrafilter $u\in\beta G$ the following assertions hold:
\begin{enumerate}[\rm (i)]
    \item $f(u)$ is an idempotent;
    \item there exists an idempotent ultrafilter $e\in\beta G$ such that $f(e)=f(u)$.
\end{enumerate}
\end{proposition}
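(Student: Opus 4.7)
The plan is as follows. Set $v = f(u)$. For part~(i), I would use the Schur property to construct nets in $S$ that simultaneously converge to $v$ and to $v^2$, and then invoke joint continuity of multiplication in the topological semigroup $S$ together with Hausdorffness to force $v^2 = v$. Concretely, for every open neighborhood $W$ of $v$ in $S$, continuity of $f$ together with the fact that $\{\langle A\rangle : A \in u\}$ is a base of neighborhoods of $u$ in $\beta G$ yields some $A_W \in u$ with $\langle A_W\rangle \subseteq f^{-1}(W)$, so that $f(A_W) \subseteq W$. The Schur property applied to $A_W$ produces $x_W, y_W \in A_W$ with $x_W y_W \in A_W$, and applying $f$ gives $f(x_W),\, f(y_W),\, f(x_W)f(y_W) = f(x_W y_W) \in W$.

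Ordering the open neighborhoods of $v$ by reverse inclusion then yields nets with $f(x_W) \to v$ and $f(y_W) \to v$, so joint continuity of multiplication in $S$ delivers $f(x_W)f(y_W) \to v^2$. At the same time $f(x_W y_W) \in W$ for every $W$, so the same net converges to $v$. Since $S$ is Hausdorff, $v^2 = v$, proving~(i).

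For~(ii), the idea is to apply Ellis--Numakura to the fibre $f^{-1}(v)$. By~(i), $v$ is idempotent, so whenever $a,b \in f^{-1}(v)$ we have $f(ab) = f(a)f(b) = v\cdot v = v$; hence $f^{-1}(v)$ is closed under the semigroup operation of $\beta G$. As the preimage of the closed point $\{v\}\subseteq S$ under the continuous map $f$, it is closed in the compact space $\beta G$ and so is a nonempty (it contains $u$) compact right topological subsemigroup of $\beta G$. Theorem~\ref{classic} then supplies an idempotent $e \in f^{-1}(v)$, which satisfies $f(e) = v = f(u)$, as required.

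The main subtlety is that part~(i) really does need joint continuity of multiplication in $S$, not merely separate continuity: otherwise the same argument, applied to the identity map $\beta G \to \beta G$, would spuriously imply that every Schur ultrafilter on $G$ is idempotent, contradicting for instance the Schur ultrafilters produced by Proposition~\ref{Prot}. The escape is precisely that the multiplication on $\beta G$ is only right-topological and not jointly continuous, so the net argument above does not apply there.
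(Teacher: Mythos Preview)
Your proof is correct and follows essentially the same approach as the paper: both parts use the Schur property together with joint continuity of multiplication in $S$ and Hausdorffness to force $f(u)^2=f(u)$, and then apply Ellis--Numakura to the compact subsemigroup $f^{-1}(f(u))$. The only cosmetic difference is that the paper works with the image filter $\{f(U):U\in u\}$ and invokes Proposition~\ref{char}(iv) (namely $UU\in u$) to show it converges to $f(u)^2$, whereas you build nets from the raw Schur definition and apply joint continuity at the end; the content is the same.
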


\begin{proof}
(i) By the continuity of $f$, the filter $\F$ on $S$ generated by the family $\{f(U):U\in u\}$ converges to $f(u)$. Fix an arbitrary open neighborhood $W$ of $f(u)f(u)$. Since $S$ is a topological semigroup and the map $f$ is continuous, there exists $U\in u$ such that $f(UU)=f(U)f(U)\subseteq W$. By Proposition~\ref{char}(iv), the set $UU$ belongs to $u$ and, consequently, $f(UU)\in \F$. Since the neighborhood $W$ was chosen arbitrarily, we get that $\F$ converges to $f(u)f(u)$. Taking into account that in a Hausdorff space each filter converges to at most one point, we get that $f(u)f(u)=f(u)$. 

(ii) By item (i), the element $y=f(u)\in S$ is an idempotent. Then $f^{-1}(y)$ is a compact subsemigroup of $\beta G$. Theorem~\ref{classic} implies that $f^{-1}(y)$ contains an idempotent $e$. It is clear that $f(e)=f(u)$.
\end{proof}

We finish this section with the following two propositions describing topological properties of the spaces $\Sch(G)$ and $\Sch^{\infty}(G)$.

\begin{lemma}\label{closed}
For any group $G$ the sets $\Sch(G)$ and $\Sch^{\infty}(G)$ are closed in $\beta G$.  
\end{lemma}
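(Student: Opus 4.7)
The plan is to show that the complements $\beta G \setminus \Sch(G)$ and $\beta G \setminus \Sch^{\infty}(G)$ are open, by exhibiting, for each ultrafilter in the complement, a basic open neighborhood $\langle U\rangle$ lying entirely in the complement. In both cases, the negation of the defining property directly produces a witnessing set $U$ that can be used.

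For $\Sch(G)$: suppose $u \notin \Sch(G)$. By the very definition of Schur, there exists $U \in u$ with $UU \cap U = \emptyset$, i.e.\ $xy \notin U$ for all $x,y \in U$. I would then claim that the basic open set $\langle U\rangle$ is disjoint from $\Sch(G)$: any $v \in \langle U\rangle$ has $U \in v$, so if $v$ were Schur, applying the Schur condition to $U$ would yield $x,y \in U$ with $xy \in U$, contradicting our choice of $U$. Hence $u \in \langle U\rangle \subseteq \beta G \setminus \Sch(G)$.

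For $\Sch^{\infty}(G)$: suppose $u \notin \Sch^{\infty}(G)$. Negating the definition, there exists $U \in u$ such that for every $x \in U$ the set $L_x = \{y \in U : xy \in U\}$ contains no infinite subset witnessing the infinitary Schur condition, i.e.\ $L_x$ is finite for every $x \in U$. Then I would show $\langle U \rangle \cap \Sch^{\infty}(G) = \emptyset$: for any $v \in \langle U\rangle$, since $U \in v$, infinitary Schur would provide $x \in U$ and an infinite $V \subseteq U$ with $xV \subseteq U$, forcing $V \subseteq L_x$ to be infinite, a contradiction.

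Together these show both sets are closed. The argument is essentially formal, with no real obstacle: the only thing to get right is the translation of the negated defining property into a single witnessing set $U \in u$, which can then be used to build the open neighborhood in the Stone--\v Cech topology. In particular, no semigroup-theoretic or algebraic machinery (such as Proposition~\ref{char}) is needed.
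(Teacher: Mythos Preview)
Your proof is correct and follows essentially the same approach as the paper's: for each ultrafilter in the complement, the negation of the defining property yields a single set $U$ in the ultrafilter, and the basic open set $\langle U\rangle$ then lies entirely in the complement. The paper's proof is slightly terser (it writes out only the $\Sch^{\infty}(G)$ case and declares the $\Sch(G)$ case analogous), but the argument is identical.
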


\begin{proof}   
Fix any $u\in\beta G\setminus \Sch^{\infty}(G)$ and $U\in u$ such that for each $x\in U$ the set $\{y\in U:xy\in U\}$ is finite. Then the basic open neighborhood $\langle U\rangle$ of $u$ is disjoint with $\Sch^{\infty}(G)$. Closedness of the set $\Sch(G)$ can be proved analogously. 
\end{proof}

\begin{proposition}\label{nowheredense}
For any group $G$ the set of all free Schur ultrafilters is nowhere dense in $\beta G\setminus G$.  
\end{proposition}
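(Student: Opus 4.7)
The plan is to combine Lemma~\ref{closed} with a direct combinatorial construction of infinite product-free subsets. Since $\Sch(G)$ is closed in $\beta G$ by Lemma~\ref{closed}, proving nowhere density amounts to showing that every nonempty basic open set $\langle A\rangle\cap(\beta G\setminus G)$, with $A\subseteq G$ infinite, contains a free non-Schur ultrafilter. By Proposition~\ref{char}(i), it suffices to exhibit, for any such $A$, an infinite subset $B\subseteq A$ with $BB\cap B=\emptyset$; any free ultrafilter extending the filter generated by $B$ then contains $A$ and is witnessed non-Schur by $B$ itself.

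To build $B$, I would assume without loss of generality that $1\notin A$ and split on the size of $\{a^2:a\in A\}$. If this set is infinite, pick $a_n\in A$ with pairwise distinct squares and let $A^*=\{a_n:n\in\omega\}$, so the squaring map is injective on $A^*$. If it is finite, by pigeonhole there exists $g\in G$ such that $A^*:=\{a\in A:a^2=g\}$ is infinite; moreover $g\notin A^*$, for otherwise $g=g^2$ forces $g=1\in A$, contradicting $1\notin A$. In both cases $A^*$ is an infinite subset of $A$ enjoying the key property that for every finite $F\subseteq A^*$ the set $\{a\in A^*:a^2\in F\}$ is finite (of size at most $|F|$ in the first case, and empty in the second, since $a^2=g\notin F\subseteq A^*$).

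Next, I would construct $B=\{b_n:n\in\omega\}\subseteq A^*$ by induction, maintaining $B_nB_n\cap B_n=\emptyset$ where $B_n=\{b_0,\ldots,b_{n-1}\}$. A direct case analysis on which of $x,y,z\in B_{n+1}=B_n\cup\{b_n\}$ coincides with $b_n$ shows that the invariant is preserved exactly when $b_n$ lies outside the forbidden set
\[
F_n=B_n\cup B_nB_n\cup B_nB_n^{-1}\cup B_n^{-1}B_n\cup\{x\in G:x^2\in B_n\},
\]
while $b_n\neq 1$ is automatic because $A^*\subseteq A$. The first four constituents of $F_n$ are finite, and by the defining property of $A^*$, the last one meets $A^*$ in a finite set as well. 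Hence $A^*\setminus F_n$ is infinite and $b_n$ can be chosen to continue the induction, yielding the desired infinite product-free $B$.

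The main delicate point I expect is handling the squaring constraint $\{x:x^2\in B_n\}$, which can be cofinite in $A$ in groups rich in elements sharing a common square, such as $(\mathbb{Z}/2)^\omega$ or the set of involutions in a large symmetric group. The dichotomy above is tailored precisely to defuse this obstacle: either the squares of elements of $A^*$ are all distinct, or they collapse to a single element lying outside $A^*$. Once this reduction is in place, the remainder is a standard greedy induction exploiting the finiteness of the relevant translates of $B_n$.
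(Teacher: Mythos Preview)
Your proof is correct and follows the same overall strategy as the paper: invoke closedness of $\Sch(G)$ from Lemma~\ref{closed} and then, for any infinite $A\subseteq G$ with $1_G\notin A$, build an infinite product-free $B\subseteq A$ by a greedy induction whose only nontrivial obstacle is the square constraint $\{x:x^2\in B_n\}$. The paper resolves that obstacle by a different dichotomy, splitting on whether $A\setminus A^{[2]}$ (with $A^{[2]}=\{a^2:a\in A\}$) is finite or infinite and running the induction inside $A$ in the first case and inside $A\setminus A^{[2]}$ in the second; your dichotomy on the size of $A^{[2]}$ itself---first passing to a subset $A^*$ on which squaring is either injective or constant with value outside $A^*$---is a slightly cleaner and more uniform way to neutralise the same obstruction, but the two arguments are otherwise essentially identical.
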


\begin{proof}
Lemma~\ref{closed} implies that the set $\Sch(G)\setminus G$ is closed in $\beta G\setminus G$. Seeking a contradiction, assume that the interior of $\Sch(G)\setminus G$ in $\beta G\setminus G$ is not empty. Then there exists an infinite subset $A\subseteq G$ such that $\langle A\rangle\setminus G\subseteq \Sch(G)$. Shrinking $A$ if necessary, we can assume that $1_G\notin A$. We are going to construct by induction an infinite subset $B\subseteq A$ such that $BB\cap B=\emptyset$. 
 Put 
$A^{[2]}=\{x^2:x\in A\}$.
The following two cases are possible: 
\begin{itemize}
    \item[(i)] the set $A\setminus A^{[2]}$ is finite;
    \item [(ii)] the set $A\setminus A^{[2]}$ is infinite.
\end{itemize}
(i) Fix any $b_0\in A$ and let $B_0=\{b_0\}$. Since $b_0\neq 1_G$, we have that $B_0\cap B_0B_0=\emptyset$. Assume that we already constructed a subset $B_n=\{b_i:i\leq n\}\subset A$ such that $B_nB_n\cap B_n=\emptyset$.   Consider the set $$C_n=\{x\in A: \exists i,j\leq n \text{ such that } xb_i=b_j \text{ or }b_ix=b_j\}.$$
Then $C_n\subseteq \{b_jb_i^{-1}: i,j\leq n\}\cup \{b_i^{-1}b_j: i,j\leq n\}$, so $C_n$ is finite. Let $D_n=\{x\in A:x^2\in B_n\}$. If the set $A\setminus D_n$ is finite, then  $A^{[2]}$ is finite, because $$A^{[2]}=\{x^{2}: x\in A\setminus D_n\}\cup \{x^{2}: x\in D_n\}\subseteq \{x^{2}: x\in A\setminus D_n\}\cup B_n.$$
It follows that $A\setminus A^{[2]}$ is infinite, which contradicts our assumption. Hence the set $A\setminus D_n$ is infinite. 
Fix any $b_{n+1}\in A\setminus (B_n\cup B_nB_n\cup C_n\cup D_n)$. It is easy to check that after completing the induction we obtain an infinite subset $B=\{b_i:i\in\w\}$ of $A$ such that $BB\cap B=\emptyset$.

(ii) Put $A'=A\setminus A^{[2]}$ and fix any $b_0\in A'$. Assume that we have already constructed a subset $B_n=\{b_i:i\leq n\}\subset A'$ such that $B_nB_n\cap B_n=\emptyset$.
Similarly as in case (i), put $$C_n=\{x\in A: \exists i,j\leq n \text{ such that } xb_i=b_j \text{ or }b_ix=b_j\}.$$
Let $b_{n+1}$ be arbitrary element of $A'\setminus (B_n\cup B_nB_n\cup C_n)$. After completing the induction we obtain an infinite set $B=\{b_i:i\in\w\}$ of $A$ such that $BB\cap B=\emptyset$.

Note that for each free ultrafilter $u\in\langle B\rangle\subseteq \langle A\rangle$ we have $u\notin \Sch(G)$, which contradicts our assumption. Thus, the interior of $\Sch(G)\setminus G$ is empty, as required. 
\end{proof}

\section{Bohr compactification of topological groups}\label{sec3} 

The following proposition follows from~\cite[Theorem 4.8]{HS}.
\begin{proposition}\label{key}
Let $X$ be a discrete semigroup, $Y$ a compact right topological semigroup, and $f:X\rightarrow Y$ a homomorphism such that $f(X)\subseteq \Lambda(Y)$. Then $f$ can be extended to a unique continuous homomorphism $\phi: \beta X\rightarrow Y$.       
\end{proposition}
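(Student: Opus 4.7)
The plan is to first obtain $\phi$ as a continuous map by the universal property of the Stone-\v{C}ech compactification, and then upgrade it to a homomorphism by a standard two-step continuity argument that separately uses the right-topological structure of $\beta X$, the left-continuity of shifts by elements of $X$ in $\beta X$, the hypothesis $f(X)\subseteq\Lambda(Y)$, and the right-topological structure of $Y$.

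First I would invoke the defining property of $\beta X$: since $Y$ is compact Hausdorff and $X$ is discrete, the map $f\colon X\to Y$ extends uniquely to a continuous map $\phi\colon \beta X\to Y$. Uniqueness of any continuous extension will follow from the density of $X$ in $\beta X$ together with the Hausdorffness of $Y$, so the content lies entirely in showing that $\phi$ is a homomorphism, i.e.\ that $\phi(uv)=\phi(u)\phi(v)$ for all $u,v\in\beta X$.

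To prove this equality I would use a two-variable density argument. Fix first $x\in X$ and consider the maps
\[
\gamma,\delta\colon \beta X\to Y,\qquad \gamma(v)=\phi(xv),\quad \delta(v)=\phi(x)\phi(v).
\]
The map $\gamma$ is the composition of the left shift $\lambda_x$ on $\beta X$ (which is continuous by condition (ii) of the canonical operation on $\beta X$, since $x\in X$) with $\phi$, so $\gamma$ is continuous. The map $\delta$ is the composition of $\phi$ with the left shift $\lambda_{f(x)}$ on $Y$; the hypothesis $f(X)\subseteq\Lambda(Y)$ makes $\lambda_{f(x)}$ continuous, so $\delta$ is continuous. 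For $y\in X$ we have $\gamma(y)=f(xy)=f(x)f(y)=\delta(y)$ because $f$ is a homomorphism. Since $X$ is dense in $\beta X$ and $Y$ is Hausdorff, $\gamma=\delta$ on all of $\beta X$. Hence $\phi(xv)=\phi(x)\phi(v)$ for every $x\in X$ and every $v\in\beta X$.

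Now fix $v\in\beta X$ and consider
\[
\alpha,\beta\colon \beta X\to Y,\qquad \alpha(u)=\phi(uv),\quad \beta(u)=\phi(u)\phi(v).
\]
The map $\alpha$ is continuous as the composition of the right shift $\rho_v$ on $\beta X$ (continuous by condition (i) of the canonical operation on $\beta X$) with $\phi$; the map $\beta$ is continuous as the composition of $\phi$ with the right shift $\rho_{\phi(v)}$ on $Y$, which is continuous because $Y$ is right topological. By the previous paragraph, $\alpha$ and $\beta$ agree on the dense subset $X$ of $\beta X$, so they agree everywhere, establishing $\phi(uv)=\phi(u)\phi(v)$. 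The main point I want to flag is that it is essential to carry out this argument in the right order: one cannot immediately invoke right continuity of $\rho_v$ on $\beta X$ against left continuity of $\lambda_{\phi(v)}$ on $Y$, because $\phi(v)$ need not lie in $\Lambda(Y)$; the stratification through the intermediate identity $\phi(xv)=\phi(x)\phi(v)$ is exactly what makes $f(X)\subseteq\Lambda(Y)$ both necessary and sufficient for this step.
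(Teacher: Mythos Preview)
Your proof is correct and complete. It differs in presentation from the paper's argument: the paper works directly with the explicit base $\{\bigcup_{a\in U}aV_a:U\in u,\ \{V_a\}\subset v\}$ of the ultrafilter $uv$, pushes this forward under $\phi$ to a filter $\mathcal{F}$ on $Y$, and shows by a single neighborhood computation (using right continuity in $Y$ to pick $U$, and then $f(a)\in\Lambda(Y)$ to pick each $V_a$) that $\mathcal{F}$ converges to both $\phi(uv)$ and $\phi(u)\phi(v)$. Your route is the standard two-step density argument: first freeze $x\in X$ and extend in the second variable using continuity of $\lambda_x$ on $\beta X$ and of $\lambda_{f(x)}$ on $Y$; then freeze $v\in\beta X$ and extend in the first variable using continuity of $\rho_v$ on $\beta X$ and of $\rho_{\phi(v)}$ on $Y$. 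The two arguments use exactly the same ingredients (and your remark about the necessary ordering is the conceptual counterpart of the paper's choice of $U$ before the $V_a$'s); yours is closer to the textbook treatment in Hindman--Strauss, while the paper's is a self-contained filter-convergence computation that avoids naming the auxiliary maps $\alpha,\beta,\gamma,\delta$.
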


We shall refer several times to the following folklore fact, which is a corollary of~\cite[Lemma 1.47]{CHK}.

\begin{lemma}[Folklore]\label{lem}
Let $X$ and $Z$ be topological spaces, $\rho$ an equivalence relation on $X$, and $f: X\rightarrow Z$, $g: X/\rho\rightarrow Z$ be maps such that $f=g\circ h_\rho$. Then $g$ is continuous if and only if $f$ is continuous.
\end{lemma}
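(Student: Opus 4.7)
The plan is to invoke the universal property of the quotient topology on $X/\rho$, which is set up by definition so that a set $W \subseteq X/\rho$ is open if and only if $h_\rho^{-1}(W)$ is open in $X$. In particular, $h_\rho$ is always continuous. I will treat the two implications separately; both are essentially one-line consequences of this universal property, so I do not expect any real obstacle.

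For the forward direction, assume $g$ is continuous. Since $h_\rho$ is continuous with respect to the quotient topology, the composition $f = g \circ h_\rho$ is continuous as a composition of continuous maps.

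For the backward direction, assume $f$ is continuous and fix an arbitrary open set $V \subseteq Z$. The key identity is
\[
h_\rho^{-1}\bigl(g^{-1}(V)\bigr) \;=\; (g \circ h_\rho)^{-1}(V) \;=\; f^{-1}(V),
\]
and the right-hand side is open in $X$ by the continuity of $f$. By the defining property of the quotient topology, this forces $g^{-1}(V)$ to be open in $X/\rho$. Since $V$ was arbitrary, $g$ is continuous, completing the proof. The only subtlety worth flagging is that one genuinely needs $X/\rho$ to carry the quotient topology (as the paper stipulates by default); for any coarser topology the backward direction would fail.
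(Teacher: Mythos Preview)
Your proof is correct and is exactly the standard argument via the universal property of the quotient topology. The paper does not actually prove this lemma at all---it merely records it as folklore and cites \cite[Lemma~1.47]{CHK}---so your direct argument supplies what the paper omits, and there is nothing further to compare.
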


Recall that for a given topological group $G$, by $\Theta$  we denote the least closed congruence on $\beta G_d$ that contains the relation $\{(u,1_G):u \hbox{ is idempotent}\}$ (see Definition~\ref{def}).

\begin{proposition}\label{old}
Let $G$ be a discrete group and $\rho$ a closed congruence on $\beta G$. Then $\Theta\subseteq \rho$  if and only if  the quotient semigroup $\beta G/\rho$ is a chart group satisfying $h_\rho(G)\subseteq \Lambda(\beta G/\rho)$.   
\end{proposition}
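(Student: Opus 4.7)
I will treat the two implications separately, with the reverse direction being the substantial one. The forward direction is short: if $\beta G/\rho$ is a group with identity $h_\rho(1_G)$, then since $h_\rho$ is a semigroup homomorphism (because $\rho$ is a congruence), every idempotent $u \in \beta G$ satisfies $h_\rho(u)^2 = h_\rho(u)$, so $h_\rho(u) = h_\rho(1_G)$, i.e.\ $(u, 1_G) \in \rho$. Since $\rho$ is closed and $\Theta$ is, by definition, the least closed congruence containing the pairs $(u, 1_G)$ with $u$ idempotent, I conclude $\Theta \subseteq \rho$.

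For the reverse direction, assume $\Theta \subseteq \rho$. Compactness of $\beta G/\rho$ is automatic and Hausdorffness follows from $\rho$ being closed. The right topological structure on $\beta G/\rho$ and the inclusion $h_\rho(G) \subseteq \Lambda(\beta G/\rho)$ both come from Lemma~\ref{lem}: the right shift by $[w]_\rho$ and the left shift by $h_\rho(g)$ on $\beta G/\rho$ factor through $h_\rho$ via the continuous maps $\rho_w$ and (for $g \in G$) $\lambda_g$ on $\beta G$. Density of $h_\rho(G)$ in $\beta G/\rho$ is immediate from density of $G$ in $\beta G$ and continuity of $h_\rho$.

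The heart of the proof is showing that $\beta G/\rho$ is a group. I first verify that $e := h_\rho(1_G)$ is the unique idempotent. Given an idempotent $[v]_\rho$, the equality $[v]_\rho \cdot [v]_\rho = [v]_\rho$ translates into $ab \in [v]_\rho$ for all $a,b \in [v]_\rho$, so $[v]_\rho$ is a subsemigroup of $\beta G$. Since $\rho$ is closed, $[v]_\rho = \{w \in \beta G : (w,v) \in \rho\}$ is closed in $\beta G$, hence a compact right topological semigroup, and Theorem~\ref{classic} supplies an idempotent $u \in [v]_\rho$. The hypothesis $\Theta \subseteq \rho$ forces $(u, 1_G) \in \rho$, so $[v]_\rho = [u]_\rho = [1_G]_\rho = e$. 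Next, to produce inverses: for any $x \in \beta G/\rho$, the set $(\beta G/\rho) \cdot x$ is the image of $\beta G/\rho$ under the continuous right shift $\rho_x$, hence closed, and it is a left ideal, so in particular a subsemigroup. A second application of Theorem~\ref{classic} yields an idempotent in $(\beta G/\rho) \cdot x$, which by uniqueness equals $e$, so $yx = e$ for some $y$. The standard monoid computation (if $yx = zy = e$ then $z = z(yx) = (zy)x = x$, whence $xy = zy = e$) upgrades this left inverse to a two-sided inverse.

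The main obstacle is the group-structure step above. The double use of Ellis--Numakura---first on each idempotent class $[v]_\rho$ to collapse it to $e$, then on each principal left ideal $(\beta G/\rho) \cdot x$ to produce left inverses---is the substantive core of the argument, and both applications depend essentially on the closedness of $\rho$ (which delivers the needed compactness) combined with $\Theta \subseteq \rho$ (which collapses all resulting idempotents to $e$).
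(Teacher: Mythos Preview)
Your proof is correct. The forward direction and the verification that $\beta G/\rho$ is compact Hausdorff right topological with $h_\rho(G)\subseteq\Lambda(\beta G/\rho)$ match the paper's approach essentially verbatim (the paper also uses equation~\eqref{eqq} and Lemma~\ref{lem} for the latter).

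The genuine difference is in establishing the group structure on $\beta G/\rho$. The paper does not argue this directly: it observes that $(eu,u)\in\rho$ for every idempotent $e$ and every $u\in\beta G$, and then invokes Theorem~1 of \cite{Zlatos} as a black box to conclude that $\beta G/\rho$ is a compact right topological group. You instead give a self-contained argument: first you show that $[1_G]_\rho$ is the \emph{unique} idempotent of $\beta G/\rho$ (by noting that each idempotent class is a closed subsemigroup of $\beta G$, applying Ellis--Numakura inside it, and using $\Theta\subseteq\rho$), and then you apply Ellis--Numakura a second time to each principal left ideal $(\beta G/\rho)\cdot x$ to produce left inverses, which the standard monoid trick upgrades to two-sided inverses. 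Your route is more elementary in that it avoids the external citation and makes transparent exactly where closedness of $\rho$ and the hypothesis $\Theta\subseteq\rho$ are each used; the paper's route is shorter on the page but depends on the reader having \cite{Zlatos} at hand.
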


\begin{proof}
 ($\Rightarrow$): Let $\rho$ be a closed congruence on $\beta G$ such that $(e,1_G)\in \rho$ for each idempotent ultrafilter $e$. 
Hence for each idempotent ultrafilter $e\in \beta G$ and $u\in \beta G$ we have $(eu,1_G u)=(eu,u)\in \rho$. \cite[Theorem~1]{Zlatos} implies that $\beta G/\rho$ is a compact right topological group. Clearly, $h_\rho(G)$ is a dense subgroup of $\beta G/\rho$. It remains to check that $h_\rho(G)\subseteq \Lambda(\beta G/\rho)$. Note that for each $u\in \beta G$, 
\begin{equation}\label{eqq}
h_\rho(\lambda_u(x))=[ux]_\rho=[u]_\rho[x]_\rho=\lambda_{[u]}(h_\rho(x)).
\end{equation}
Recall that for each $g\in G$ the left shift $\lambda_g$ is continuous in $\beta G$, implying that the map $h_\rho\circ \lambda_g$ is continuous too. Equation~\ref{eqq} follows that $h_\rho\circ \lambda_u=\lambda_{[u]}\circ h_\rho$ for every $u\in \beta G$. Thus the map $\lambda_{[g]}\circ h_\rho$ is continuous for each $g\in G$. By Lemma~\ref{lem}, the left shift $\lambda_{[g]}:\beta G/\rho\rightarrow \beta G/\rho$ is continuous for each $g\in G$. Hence $h_\rho(G)\subseteq \Lambda(\beta G/\rho)$. 

($\Leftarrow$): Since a homomorphic image of an idempotent is an idempotent and $\rho$ is a group congruence, we get $(e,1_G)\in\rho$ for each idempotent $e\in\beta G$. Hence $\Theta \subseteq \rho$.
\end{proof}

Recall that for a given topological group $G$, by $\Phi$ we denote the least closed congruence on $\beta G_d$ that contains the relation $\{(u,1_G):u \hbox{ is idempotent}\}\cup \{(u,1_G): u\in\Ult(G)\}$ (see Definition~\ref{def}). Also recall that for a given congruence $\rho$ on $\beta G_d$ the map $\mathfrak h_\rho: G\rightarrow \beta G_d/\rho$ is defined by $\mathfrak h_\rho(g)=h_\rho(g)=[g]_\rho$ for all $g\in G$ (see Definition~\ref{map}).

\begin{proposition}\label{gennew}
Let $G$ be a right topological group, and $\rho$ a closed group congruence on $\beta G_d$. Then the map $\mathfrak h_\rho:  G\rightarrow \beta G_d/\rho$ is continuous if and only if $\Phi\subseteq \rho$.    
\end{proposition}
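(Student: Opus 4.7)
My plan is to prove both directions by exploiting the compact Hausdorff right-topological-group structure of $\beta G_d/\rho$ together with the fact that $\Phi$ is the \emph{least} closed congruence containing the generating pairs.

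For the forward direction, assume $\mathfrak h_\rho$ is continuous. By minimality of $\Phi$, it suffices to show that $(u,1_G)\in\rho$ whenever $u$ is an idempotent ultrafilter or $u\in\Ult(G)$. For an idempotent $u$: since $\rho$ is a group congruence, $\beta G_d/\rho$ is a group, and the image $[u]_\rho$ of an idempotent under the homomorphism $h_\rho$ is an idempotent, hence equals $[1_G]_\rho$. For $u\in\Ult(G)$: the image filter on $\beta G_d/\rho$ of $u$ under $\mathfrak h_\rho$ converges simultaneously to $[1_G]_\rho$ (by continuity of $\mathfrak h_\rho$ together with $u\to 1_G$ in $G$) and to $[u]_\rho$ (since $u$ converges to itself as a point of $\beta G_d$ and $h_\rho$ is continuous, while $h_\rho$ restricts to $\mathfrak h_\rho$ on $G$). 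Hausdorffness of $\beta G_d/\rho$, which follows from $\rho$ being closed, then forces $[u]_\rho=[1_G]_\rho$.

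For the backward direction, assume $\Phi\subseteq\rho$, so that in particular $\Ult(G)\subseteq[1_G]_\rho$. First I verify that $\beta G_d/\rho$ is a compact Hausdorff right topological group: compactness is automatic, Hausdorffness follows from $\rho$ being closed, the group structure from $\rho$ being a group congruence, and continuity of the right shift $\bar\rho_{[u]_\rho}$ on $\beta G_d/\rho$ follows from continuity of $\rho_u$ on $\beta G_d$ via Lemma~\ref{lem}. Since $\mathfrak h_\rho$ is a group homomorphism between right topological groups, continuity of $\mathfrak h_\rho$ everywhere reduces by a standard argument (writing $g_\alpha=(g_\alpha g_0^{-1})g_0$ and using continuity of right translations both in $G$ and in $\beta G_d/\rho$) to continuity at $1_G$.

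To prove continuity at $1_G$, fix an open neighborhood $W$ of $[1_G]_\rho$ in $\beta G_d/\rho$ and set $O=h_\rho^{-1}(W)$, an open subset of $\beta G_d$ containing $[1_G]_\rho$ and hence $\Ult(G)$. If $O\cap G=\mathfrak h_\rho^{-1}(W)$ were not a neighborhood of $1_G$ in $G$, then the family $\{V\setminus(O\cap G):V\text{ an open neighborhood of }1_G\text{ in }G\}$ would have the finite intersection property, and any ultrafilter $u$ refining it would satisfy $u\in\Ult(G)$ while also containing $G\setminus(O\cap G)$. Since $u\in\Ult(G)\subseteq[1_G]_\rho\subseteq O$ and $O$ is open in $\beta G_d$, there is a basic clopen $\langle A\rangle$ with $u\in\langle A\rangle\subseteq O$, giving $A\in u$ and $A=G\cap\langle A\rangle\subseteq G\cap O=\mathfrak h_\rho^{-1}(W)$, contradicting $G\setminus\mathfrak h_\rho^{-1}(W)\in u$. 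The main delicate point is precisely this last step: translating openness of $h_\rho^{-1}(W)$ inside $\beta G_d$ into a neighborhood condition for its trace on $G$ in the (potentially much coarser) topology of $G$, which is accomplished by exploiting that $\Ult(G)$ both sits inside $[1_G]_\rho$ and encodes filters containing every neighborhood of $1_G$.
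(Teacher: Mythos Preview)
Your proof is correct and follows essentially the same approach as the paper. The forward direction is identical; for the backward direction, the paper cites Proposition~\ref{old} to get the chart-group structure of $\beta G_d/\rho$ while you re-derive the right-topological part directly, and the paper phrases the continuity-at-$1_G$ check directly (for each $u\in\Ult(G)$, use $u\in[1_G]_\rho\subseteq h_\rho^{-1}(W)$ open in $\beta G_d$ to get $V=h_\rho^{-1}(W)\cap G\in u$) whereas you package the same observation as a proof by contradiction, but the underlying idea is the same.
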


\begin{proof}
($\Rightarrow$): Since $\rho$ is a group congruence, $(e,1_G)\in\rho$ for each idempotent $e\in\beta G_d$. Fix any  ultrafilter $u\in \Ult(G)$, i.e., $u$ converges to $1_G$ in $G$. By the continuity of the map $\mathfrak h_\rho$, the ultrafilter $\mathfrak h_\rho(u)$ on $\beta G_d/\rho$ generated by the family $\{\mathfrak h_\rho(U):U\in u\}$ converges to $\mathfrak h_\rho(1_G)=h_\rho(1_G)$. Since the map $h_\rho$ is continuous, for each open neighborhood $W\subseteq \beta G_d/\rho$ of $h_\rho(u)$ there exists $U\in u$ such that $h_\rho(U)=\mathfrak h_\rho(U)\subseteq W$. Hence the ultrafilter $\mathfrak h_\rho (u)$ converges to $h_\rho(u)$. The closedness of the congruence $\rho$ implies that the space $\beta G_d/\rho$ is Hausdorff. It follows that $h_\rho(u)=h_\rho(1_G)$ and, consequently, $(u,1_G)\in\rho$. Hence $\Phi\subseteq \rho$.

($\Leftarrow$):
By Proposition~\ref{old}, $\beta G_d/\rho$ is a chart group. To establish the continuity of the map $\mathfrak h_\rho$, it suffices to check that for any $u\in \Ult(G)$ the ultrafilter $\mathfrak h_\rho(u)$ on $\beta G_d/\rho$ generated by the family $\{\mathfrak h_\rho(U): U\in u\}$ converges to $[1_G]_{\rho}$. 
For this fix arbitrary $u\in \Ult(G)$ and open neighborhood $W$ of $[1_G]_\rho$ in $\beta G_d/\rho$. Since $\Phi\subseteq \rho$ we get that $(u,1_G)\in\rho$. It follows that $h_\rho^{-1}(W)$ is an open neighborhood of $u$ in $\beta G_d$. Then $V=h_\rho^{-1}(W)\cap G\in u$. Observe that 
$$\mathfrak h_\rho(V)=h_\rho(V)\subseteq h_\rho(h_\rho^{-1}(W)) \subseteq W,$$ witnessing that $W\in \mathfrak h_\rho(u)$. Hence the ultrafilter $\mathfrak h_\rho(u)$ converges to $[1_G]_\rho$.
\end{proof}

The following renowned theorem was proven in~\cite{E}.

\begin{theorem}[Ellis]\label{classic1}
A locally compact semitopological group is a topological group.    
\end{theorem}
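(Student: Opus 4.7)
The plan is to prove, in order: (a) multiplication $m:G\times G\to G$, $m(x,y)=xy$, is jointly continuous at $(1_G,1_G)$; (b) $m$ is jointly continuous at every point of $G\times G$; and (c) the inversion map $x\mapsto x^{-1}$ is continuous. Step (a) is the main obstacle; the other two follow formally.

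For step (a) the key inputs are that a locally compact Hausdorff space is a Baire space (so both $G$ and $G\times G$ are Baire), and that each $\lambda_g$ and $\rho_g$ is a homeomorphism of $G$ (the inverse $\lambda_{g^{-1}}$, respectively $\rho_{g^{-1}}$, is continuous by the semitopological hypothesis). Given a neighborhood $W$ of $1_G$ the aim is to produce neighborhoods $U,V$ of $1_G$ with $UV\subseteq W$. The strategy is to fix a compact neighborhood $K$ of $1_G$, cover $K$ by countably many translates of a small open set using separate continuity of the shifts, and invoke the Baire category theorem on $G\times G$ to find a translate whose pullback under $m$ has nonempty interior. Sliding that interior back to $(1_G,1_G)$ via the already continuous shifts yields the required uniform $U,V$.

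Step (b) is purely formal. For $(a,b)\in G\times G$ and a net $(u_\alpha,v_\alpha)\to(a,b)$, write
\[
u_\alpha v_\alpha \;=\; a\,\bigl((a^{-1}u_\alpha)(v_\alpha b^{-1})\bigr)\,b.
\]
Separate continuity of $\lambda_{a^{-1}}$ and $\rho_{b^{-1}}$ gives $(a^{-1}u_\alpha,v_\alpha b^{-1})\to(1_G,1_G)$, step (a) yields $(a^{-1}u_\alpha)(v_\alpha b^{-1})\to 1_G$, and separate continuity of $\lambda_a$ and $\rho_b$ delivers $u_\alpha v_\alpha\to ab$.

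For step (c) use local compactness: given a net $x_\alpha\to x$, fix a compact neighborhood $K$ of $x^{-1}$. A standard subnet argument places a cofinal piece of $(x_\alpha^{-1})$ inside $K$; any convergent subnet $x_\beta^{-1}\to y$ yields, via joint continuity of $m$ from step (b), that $1_G=x_\beta x_\beta^{-1}\to xy$, forcing $y=x^{-1}$ by uniqueness of inverses. Hence $x_\alpha^{-1}\to x^{-1}$, proving continuity of inversion. The hard part is step (a): one must engineer the Baire category argument so that separate continuity, which is a priori very weak, combines with local compactness to yield a uniform bound $UV\subseteq W$; without this uniformity there is no path to joint continuity.
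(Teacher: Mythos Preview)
The paper does not prove this theorem at all; it merely quotes it as a classical result of Ellis and cites the original 1957 paper \cite{E}. So there is no ``paper's own proof'' to compare against, and your proposal must be judged on its own merits.

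Your outline is in the right spirit, but two steps contain genuine gaps. In step~(a) you write ``cover $K$ by countably many translates of a small open set \dots\ and invoke the Baire category theorem.'' A general locally compact group need not be $\sigma$-compact or second countable, so there is no reason a countable cover exists; without it the Baire argument as you describe it does not get off the ground. The actual route to joint continuity at a point (Ellis's original argument, or the later Namioka-type theorems) is considerably more delicate than what you sketch, and the sketch does not contain the key idea.

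The more serious gap is in step~(c). You assert that ``a standard subnet argument places a cofinal piece of $(x_\alpha^{-1})$ inside $K$,'' but nothing you have proved so far guarantees this: the whole difficulty in establishing continuity of inversion is precisely to rule out that the inverses $x_\alpha^{-1}$ escape every compact set. Your subsequent closed-graph style argument (pass to a convergent subnet $x_\beta^{-1}\to y$ and use joint continuity of $m$ to get $xy=1_G$) is correct \emph{once} one knows the net of inverses is eventually trapped in a compact set, but that trapping is exactly the missing lemma. One standard way to supply it is to first show that some compact neighborhood $K$ of $1_G$ has $K^{-1}$ relatively compact (equivalently, that $U^{-1}$ is a neighborhood of $1_G$ for every neighborhood $U$ of $1_G$); this uses joint continuity of $m$ together with a compactness argument, and is not the triviality your phrase ``standard subnet argument'' suggests.
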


Recall that for each ultrafilter $u$ on a group $G$ by $u^{-1}$ we denote the ultrafilter $\{U^{-1}:U\in u\}$, and for any homomorphism $f:X\rightarrow Y$, $\Ker(f)=\{(x,y)\in X{\times}X: f(x)=f(y)\}$ is a congruence on $X$. If, moreover, $Y$ is a Hausdorff topological semigroup, then it is easy to check that the congruence $\Ker(f)$ is closed.
We are in a position to prove Theorem~\ref{topgr}, i.e., to show that for any topological group $G$ the pair $(\beta G_d/\Psi,\mathfrak h_\Psi)$ is the Bohr compactification of $G$.

\begin{proof}[{\bf Proof of Theorem~\ref{topgr}}]
By Proposition~\ref{old}, $\beta G_d/\Psi$ is a chart group. Since for each ultrafilter $u$ on $G$, $uu^{-1}$ is a Schur ultrafilter (see Proposition~\ref{Prot}), and $\{(u,1_G): u\in \Sch(G)\}\subset \Psi$, we get that  
$$[u]_{\Psi}[u^{-1}]_{\Psi}=[uu^{-1}]_{\Psi}=[1_G]_{\Psi}.$$ Thus, $[u^{-1}]_{\Psi}$ is the inverse element of $[u]_{\Psi}$ in the group $\beta G_d/\Psi$. 
Let $\operatorname{inv}:\beta G_d/\Psi\rightarrow \beta G_d/\Psi$ be the inversion.
Define a map $\iota:\beta G_d\rightarrow \beta G_d$ by $\iota(u)=u^{-1}$. The arguments above imply that for any $u\in\beta G_d$, $$h_\Psi(\iota(u))=\operatorname{inv}(h_\Psi(u)).$$  
Since $\iota$ is a homeomorphism we obtain that the map $h_\Psi\circ \iota$ is continuous. By Lemma~\ref{lem}, the map $\operatorname{inv}: \beta G_d/\Psi\rightarrow \beta G_d/\Psi$ is continuous. Observe that for each $p\in \beta G/\Psi$ we have the following:
$$\lambda_{p}=\operatorname{inv}\circ \rho_{\operatorname{inv}(p)}\circ \operatorname{inv}.$$
Taking into account that $\beta G_d/\Psi$ is a right topological group, the formula above implies that left shifts are continuous in $\beta G_d/\Psi$ and, as such, $\beta G_d/\Psi$ is a compact semitopological group. By Theorem~\ref{classic1}, $\beta G_d/\Psi$ is a compact topological group.
 Since $\Phi\subseteq \Psi$, Proposition~\ref{gennew} implies that the map $\mathfrak h_\Psi: G\rightarrow \beta G_d/\Psi$ is continuous.

In order to show that $\beta G_d/\Psi$ is the Bohr compactification of $G$, fix any continuous homomorphism $f$ from $G$ to a compact topological group $H$. Without loss of generality we can assume that $f(G)$ is dense in $H$.
Proposition~\ref{key} yields a continuous homomorphism $\phi:\beta G_d\rightarrow H$, such that $\phi(x)=f(x)$ for each $x\in G$. Since $f(G)$ is dense in $H$, the map $\phi$ is surjective. Observe that the congruence $\Ker(\phi)$ on $\beta G_d$ is closed, as $\phi$ is continuous and the space $H$ is Hausdorff.
By Proposition~\ref{Schur}, for each Schur ultrafilter $u$ on $G$ the element  $\phi(u)$ is an idempotent, witnessing that $\phi(u)=1_H=\phi(1_G)$. Thus $\{(u,1_G): u\in \Sch(G)\}\subset \Ker(\phi)$.
Consider any ultrafilter $u\in \Ult(G)$. By the continuity of $f$, the ultrafilter $f(u)$ on $H$ generated by the family $\{f(U): U\in u\}$ converges to $f(1_G)=1_H$.
The continuity of $\phi$ implies that for each open neighborhood $V$ of $\phi(u)$ there exists $U\in u$ such that $\phi(U)=f(U)\subseteq V$. It follows that the ultrafilter $f(u)$ converges to $\phi(u)$. Since the space $H$ is Hausdorff, 
$\phi(u)=1_H=\phi(1_G)$ and, consequently, $(u,1_G)\in \Ker(\phi)$.  Hence $\{(u,1_G): u\in\Ult(G)\}\subset \Ker(\phi)$, witnessing that $\Psi\subseteq \Ker(\phi)$. By Theorem~1.48 from~\cite{CHK}, there exists a continuous homomorphism $g: \beta G_d/\Psi\rightarrow H$ such that $\phi=g\circ h_\Psi$. Recall that for each $x\in G$, $\mathfrak h_\Psi(x)=h_\Psi(x)$ and $\phi(x)=f(x)$, witnessing that $f(x)=g(\mathfrak h_\Psi(x))$. Hence the pair $(\beta G_d/\Psi,\mathfrak h_\Psi)$ is the Bohr compactification of $G$. 
\end{proof}

The following proposition characterizes compact topological groups. 

\begin{proposition}\label{autom}
Let $G$ be a group endowed with a topology. Then $G$ is a compact topological group if and only if there exist a discrete group $H$ and a continuous surjective homomorphism $g:\beta H\rightarrow G$ such that $g(u)=1_G$ for any Schur ultrafilter $u$ on $H$.
\end{proposition}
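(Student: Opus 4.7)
The proposition is an ``if and only if'', and the easy direction is the forward one. If $G$ is a compact topological group, set $H=G_d$ and take for $f$ the identity map $G_d\to G$, which is a continuous homomorphism with $f(H)=G\subseteq \Lambda(G)=G$. By Proposition~\ref{key}, $f$ extends to a continuous homomorphism $g:\beta H\to G$, which is surjective (since already $g(H)=G$), and for every Schur ultrafilter $u$ on $H$, Proposition~\ref{Schur}(i) says that $g(u)$ is an idempotent of the group $G$, hence $g(u)=1_G$.

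For the nontrivial direction, assume $g:\beta H\to G$ is given. Compactness of $G$ is immediate from compactness of $\beta H$ and continuity of $g$. It remains to promote $G$ from a Hausdorff compact group to a topological group. The strategy mirrors the proof of Theorem~\ref{topgr}: first use right-shifts on $\beta H$ to obtain right-topologicality of $G$, then use the Schur hypothesis to get continuous inversion, then apply Ellis' theorem (Theorem~\ref{classic1}). For right shifts, pick any $s\in G$ and $u\in g^{-1}(s)$; since $\rho_u$ is continuous on $\beta H$ and $\rho_s\circ g=g\circ\rho_u$, the map $\rho_s\circ g$ is continuous. Because $g$ is a continuous surjection from a compact space to a Hausdorff space it is closed and hence a quotient map, so $\rho_s$ is continuous on $G$.

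For inversion, recall from Proposition~\ref{Prot} that $uu^{-1}$ is Schur for every $u\in\beta H$. Hence $g(uu^{-1})=1_G$, which rearranges to $g(u^{-1})=g(u)^{-1}$; in particular, the map $\iota:u\mapsto u^{-1}$ on $\beta H$ (a homeomorphism) satisfies $\mathrm{inv}\circ g=g\circ\iota$, where $\mathrm{inv}:G\to G$ is group inversion. Thus $\mathrm{inv}\circ g$ is continuous, and since $g$ is a quotient map $\mathrm{inv}$ itself is continuous on $G$. Combining this with Step~1 via the identity $\lambda_s=\mathrm{inv}\circ\rho_{s^{-1}}\circ\mathrm{inv}$ gives continuity of every left shift, so $G$ is a compact Hausdorff semitopological group. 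Ellis' theorem (Theorem~\ref{classic1}) then yields that $G$ is a topological group.

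\textbf{Main obstacle.} The delicate point is that we are handed the topology on $G$ abstractly, with no a priori compatibility with the group structure beyond what $g$ provides. The Schur hypothesis is precisely what one needs to push inversion from $\beta H$ down to $G$; once inversion is continuous, the rest is a routine combination of ``$g$ is a quotient map onto a compact Hausdorff space'' with Ellis' theorem. A possible alternative, bypassing Ellis, is to observe that $\Xi\subseteq\Ker(g)$ (which is a closed congruence since $G$ is Hausdorff), invoke Theorem~\ref{Bohr} to conclude that $\beta H/\Xi$ is a compact topological group, and transfer the topological-group structure to $G$ via the induced continuous bijection from $(\beta H/\Xi)/K\to G$, where $K=\tilde g^{-1}(1_G)$; but the direct Ellis argument above is shorter and keeps the proof self-contained.
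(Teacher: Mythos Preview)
Your proof is correct. The forward direction matches the paper's argument exactly. For the converse, you take a genuinely different route.

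The paper's proof is the alternative you sketch in your final paragraph: since $g$ kills every Schur ultrafilter, $\Xi\subseteq\Ker(g)$, so $g$ factors as $f\circ h_\Xi$ for a continuous surjective homomorphism $f:\beta H/\Xi\to G$; by Theorem~\ref{Bohr} the domain is a compact topological group, and a Hausdorff continuous homomorphic image of a compact topological group is again one. Your direct argument instead replays the mechanism of the proof of Theorem~\ref{topgr} at the level of $G$: you exploit that $g$ is a quotient map (closed continuous surjection onto Hausdorff) to push the continuity of right shifts and of $u\mapsto u^{-1}$ down from $\beta H$ to $G$, obtaining a compact semitopological group, and then invoke Ellis. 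Both proofs ultimately rest on Ellis' theorem (the paper's through Theorem~\ref{topgr}); the paper's version is shorter once Theorem~\ref{Bohr} is in hand, while yours is logically independent of the Bohr-compactification description and could stand earlier in the paper. Your characterisation of your argument as ``self-contained'' is slightly generous, since Theorem~\ref{classic1} is doing the real work in both approaches.
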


\begin{proof}
($\Rightarrow$) Fix a compact topological group $G$. Let $H$ be the group $G$ endowed with the discrete topology, and $i: H\rightarrow G$ be the identity homomorphism. By Proposition~\ref{key}, there exists a continuous homomorphism $g:\beta H\rightarrow G$ that extends $i$. It is clear that $g$ is surjective. Proposition~\ref{Schur} implies that $g(u)=1_G$ for any Schur ultrafilter $u$ on $H$.

($\Leftarrow$) Assume that $G$ is a group equipped with a Hausdorff topology, and there exists a discrete group $H$ together with a continuous surjective homomorphism $g:\beta H\rightarrow G$ such that $g(u)=1_G$ for any Schur ultrafilter $u$ on $H$. 
The definition of $g$ implies that $\Xi \subseteq \Ker(g)$, where $\Xi$ is the congruence on $\beta H$ from Definition~\ref{def}.

Define a map $f: \beta H/\Xi\rightarrow G$ by $f([x]_\Xi)=g(x)$. Since 
$$f([x]_\Xi[y]_\Xi)=f([xy]_\Xi)=g(xy)= g(x) g(y)=f([x]_\Xi)f([y]_\Xi),$$
we obtain that $f$ is a surjective homomorphism. Observe that $g=f\circ h_\Xi$. Since the map $g$ is continuous, Lemma~\ref{lem} implies that the homomorphism $f$ is continuous. By Theorem~\ref{Bohr}, $\beta H/\Xi$ is the Bohr compactification of $H$. 
Then $G$, being a continuous homomorphic image of $\beta H/\Xi$, is a compact topological group as well.
\end{proof}






\section{Chart groups}\label{sec4}

We are now in a position to prove Theorem~\ref{chartgr}, i.e., to show that for any right topological group $G$ the pair $(\beta G_d/\Phi,\mathfrak h_\Phi)$ is the universal chartification of $G$.

\begin{proof}[{\bf Proof of Theorem~\ref{chartgr}}]
Proposition~\ref{old} implies that $\beta G_d/\Phi$ is a chart group and $\phi(G)\subseteq \Lambda (\beta G_d/\Phi)$. By Proposition~\ref{gennew}, the map $\mathfrak h_\Phi: G\rightarrow \beta G_d/\Phi$ is continuous.

Fix any chart group $H$ and a continuous homomorphism $g:G\rightarrow H$ such that $g(G)\subseteq \Lambda (H)$ is dense in $H$. Then $g$ can be also viewed as a continuous homomorphism from $G_d$ to $H$. Proposition~\ref{key} yields a continuous homomorphic extension $\phi:\beta G_d\rightarrow H$ of $g$. Note that $\phi$ is surjective, as $g(G)$ is dense in $H$.  
Since $\phi$ is continuous and $H$ is a chart group, we get that $\Ker(\phi)$ is a closed group congruence. It follows that $\Theta\subseteq \Ker(\phi)$. 
Consider any ultrafilter $u\in \Ult (G)$. By the continuity of $g$, the ultrafilter $g(u)$ on $H$ generated by the family $\{g(U): U\in u\}$ converges to $g(1_G)=1_H$.
The continuity of $\phi$ implies that for each open neighborhood $V$ of $\phi(u)$ there exists $U\in u$ such that $\phi(U)=g(U)\subseteq V$. It follows that the ultrafilter $g(u)$ converges to $\phi(u)$. Since the space $H$ is Hausdorff, $\phi(u)=1_H=\phi(1_G)$ and, consequently, $(u,1_G)\in \Ker(\phi)$. Thus $\{(u,1_G): u\in\Ult(G)\}\subset \Ker(\phi)$, and hence  $\Phi\subseteq \Ker(\phi)$. 

Define a map $f: \beta G_d/\Phi\rightarrow H$ by $f([x]_\Phi)=\phi(x)$. Since 
$$f([x]_\Phi[y]_\Phi)= f([xy]_\Phi)= \phi(xy)= \phi(x)\phi(y)=f([x]_\Phi)f([y]_\Phi),$$
we obtain that $f$ is a surjective homomorphism. Observe that $\phi=f\circ h_\Phi$. Since the map $\phi$ is continuous, Lemma~\ref{lem} implies that $f$ is continuous. Finally, for each $y\in G$, $$f(\mathfrak h_\Phi(y))=f(h_\Phi(y))=f([y]_\Phi)=\phi(y)=g(y).$$ Hence the pair $(\beta G_d/\Phi,\mathfrak h_\Phi)$ is the universal chartification of $G$. 
\end{proof}

Recall that $\Lambda (G)$ is a subgroup of $G$ for any chart group $G$ (see ~\cite[Proposition 1]{R0}).

\begin{proposition}\label{idempotent}
Let $u$ be an idempotent ultrafilter on a chart group $G$. If $\Lambda(G)\in u$, then $u$ converges to $1_G$. \end{proposition}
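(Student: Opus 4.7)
My plan is to reduce the claim to Proposition~\ref{key} applied to the discrete subgroup $\Lambda(G)_d$. By the remark immediately preceding the proposition, $\Lambda(G)$ is a subgroup of the chart group $G$, so the inclusion $i:\Lambda(G)_d\to G$ is a homomorphism from a discrete semigroup into a compact Hausdorff right topological semigroup whose image $\Lambda(G)$ is trivially contained in $\Lambda(G)$. Hence Proposition~\ref{key} yields a continuous homomorphism $\phi:\beta\Lambda(G)_d\to G$ extending $i$.

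Next, since $\Lambda(G)\in u$, I will view $u$ as the element $\tilde u\in\beta\Lambda(G)_d$ obtained by restricting it to subsets of $\Lambda(G)$, using the canonical identification of $\beta\Lambda(G)_d$ with the clopen subset $\{v\in\beta G_d:\Lambda(G)\in v\}$ of $\beta G_d$. The key technical point to verify is that this identification is a semigroup embedding, so that idempotency transfers from $\beta G_d$ to $\beta\Lambda(G)_d$; I would check it directly from the formula $A\in vw\iff\{s\in G:\lambda_s^{-1}(A)\in w\}\in v$, exploiting that distinct left cosets of the subgroup $\Lambda(G)$ are disjoint, so for $B\subseteq\Lambda(G)$ only $s\in\Lambda(G)$ can satisfy $\lambda_s^{-1}(B)\in w$. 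Granted this, $\tilde u$ is an idempotent of $\beta\Lambda(G)_d$, and therefore $\phi(\tilde u)=\phi(\tilde u)^2$ is an idempotent of the group $G$, which forces $\phi(\tilde u)=1_G$.

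Convergence of $u$ to $1_G$ will then follow from continuity of $\phi$ at $\tilde u$. Given an arbitrary open neighborhood $V$ of $1_G$ in $G$, there exists $A\in\tilde u$ with $\phi(\langle A\rangle)\subseteq V$; since $\phi$ restricts to the identity on principal ultrafilters, $A\subseteq V$, and $A\in\tilde u\subseteq u$ forces $V\in u$. The only genuinely delicate step is verifying the multiplicativity of the restriction $u\mapsto\tilde u$; everything else is a direct application of Proposition~\ref{key} together with the fact that a group contains a unique idempotent.
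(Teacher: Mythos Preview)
Your argument is correct and takes a genuinely different route from the paper. The paper argues directly: letting $x$ be the limit of $u$ in the compact space $G$, it shows $u$ also converges to $x^2$ by fixing a neighbourhood $W$ of $x^2$, using right continuity to get $U\in u$ with $Ux\subseteq W$, intersecting with $\Lambda(G)$, and then using continuity of each $\lambda_a$ ($a\in U\cap\Lambda(G)$) together with idempotency $uu=u$ to push an element of $u$ inside $W$. Hausdorffness then gives $x=x^2=1_G$.

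Your approach instead factors through Proposition~\ref{key}: you extend the inclusion $\Lambda(G)_d\hookrightarrow G$ to a continuous homomorphism $\phi:\beta\Lambda(G)_d\to G$, identify $u$ with an idempotent $\tilde u\in\beta\Lambda(G)_d$ via the standard embedding $\beta\Lambda(G)_d\cong\langle\Lambda(G)\rangle\subseteq\beta G_d$, and conclude $\phi(\tilde u)=1_G$ since homomorphisms preserve idempotents and groups have only one. The convergence argument you give (pulling back a neighbourhood of $1_G$ to a basic $\langle A\rangle$ and using $\phi{\restriction}_{\Lambda(G)}=\id$) is clean. The one step you flag as delicate---multiplicativity of the restriction---is indeed the only thing needing care, and your coset argument handles it (this is also the content of standard results such as \cite[Theorem~4.8]{HS} on subsemigroups). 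Your route is a bit heavier in machinery but recycles Proposition~\ref{key} nicely; the paper's route is shorter and entirely self-contained, essentially unwinding by hand exactly what $\phi$ does at the single point $\tilde u$.
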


\begin{proof}
Since the group $G$ is compact, there exists $x\in G$ such that $u$ converges to $x$. Pick an open neighborhood $W$ of $x^2$. Since $G$ is a chart group, there exists $U\in u$ such that $Ux\subseteq W$. Observe that $V=U\cap \Lambda(G)\in u$. Since $V\subseteq \Lambda (G)$ and $u$ converges to $x$, for each $a\in V$ there exists $V_a\in u$ such that $aV_a\subseteq W$. Thus $\bigcup_{a\in V}aV_a\subseteq W$. As $u$ is an idempotent ultrafilter, we get that $\bigcup_{a\in V}aV_a\in u$. It follows that $W\in u$, witnessing that $u$ converges to $x^2$. Since $G$ is Hausdorff, $x=x^2=1_G$.  
\end{proof}

\begin{proposition}\label{compSchur}
Each Schur ultrafilter on a compact topological group $G$ converges to $1_G$. 
\end{proposition}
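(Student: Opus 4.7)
The plan is to reduce the statement to a direct application of Proposition~\ref{key} and Proposition~\ref{Schur}(i). Since $G$ is a topological group, every left translation is continuous, so $G = \Lambda(G)$. The identity map $i: G_d \to G$ is thus a homomorphism from the discrete group $G_d$ into the compact Hausdorff right topological semigroup $G$ with $i(G_d) = G \subseteq \Lambda(G)$.

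By Proposition~\ref{key}, there exists a unique continuous homomorphism $\phi: \beta G_d \to G$ extending $i$. Since $G$ is a compact Hausdorff topological group (hence in particular a Hausdorff topological semigroup), Proposition~\ref{Schur}(i) applies: for the Schur ultrafilter $u$, the image $\phi(u)$ is an idempotent of $G$. In a group the only idempotent is the unit, so $\phi(u) = 1_G$.

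It remains to observe that convergence of $u$ in $G$ is captured by $\phi$. Indeed, because $\phi$ extends the identity and is continuous, for every open neighborhood $W$ of $\phi(u) = 1_G$ in $G$ the preimage $\phi^{-1}(W)$ is an open neighborhood of $u$ in $\beta G_d$, hence contains some basic set $\langle A\rangle$ with $A \in u$; then $A \subseteq \phi^{-1}(W) \cap G = W$, so $W \in u$. Thus $u$ converges to $1_G$, as desired.

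There is no real obstacle here: the combinatorial content of Schur ultrafilters was already harvested in Proposition~\ref{Schur}, and compactness of $G$ together with $\Lambda(G) = G$ makes Proposition~\ref{key} immediately applicable. The only subtle point worth stating explicitly is the identification of $\phi(u)$ with the limit of $u$ in the topology of $G$, which follows automatically from the continuity of $\phi$ and the fact that $\phi$ restricts to the identity on $G$.
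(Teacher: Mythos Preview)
Your proof is correct, but it takes a different route from the paper's. The paper argues directly: compactness of $G$ gives a limit $x\in G$ of $u$; for any neighborhood $W$ of $x^2$, joint continuity of multiplication yields $U\in u$ with $UU\subseteq W$; by Proposition~\ref{char}(iv) one has $UU\in u$, so $u$ also converges to $x^2$; Hausdorffness forces $x=x^2=1_G$. Your argument instead routes through the Stone--\v{C}ech machinery, applying Propositions~\ref{key} and~\ref{Schur}(i) to conclude that $\phi(u)$ is an idempotent, and then identifying $\phi(u)$ with the topological limit of $u$ in $G$. Both proofs encode the same idea (the limit must square to itself), but the paper's version is shorter and self-contained, needing only the elementary Proposition~\ref{char}(iv), whereas yours demonstrates how the statement follows formally from the general framework already in place. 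The one step you genuinely have to supply---that $\phi(u)$ coincides with the limit of $u$ in $G$---is handled correctly via the continuity of $\phi$ and $\phi{\restriction}_G=\id$.
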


\begin{proof}
Let $u$ be a Schur ultrafilter on $G$. 
Since $G$ is compact, there exists $x\in G$ such that $u$ converges to $x$. Fix any open neighborhood $W$ of $x^2$. Since $G$ is a topological group, there exists $U\in u$ such that $UU\subseteq W$. By Proposition~\ref{char}(iv), $UU\in u$, which implies that $u$ converges to $x^2$. Since $G$ is Hausdorff, we get $x=x^2=1_G$.     
\end{proof}

We are in a position to prove Theorem~\ref{main}. Recall that we need to show the equivalence of the following assertions for every chart group $G$.
\begin{enumerate}[\rm (i)]
    \item $G$ is a topological group;
    \item every Schur ultrafilter on $G$ converges to $1_G$;
    \item there exists a dense subgroup $H\subseteq \Lambda(G)$ such that each Schur ultrafilter on $H$ converges to $1_G$.
\end{enumerate}

\begin{proof}[{\bf Proof of Theorem~\ref{main}}]
The implication (i) $\Rightarrow$ (ii)  follows from Propositions~\ref{compSchur}. The implication (ii) $\Rightarrow$ (iii) is trivial.

(iii) $\Rightarrow$ (i). Fix a dense subgroup $H\subseteq \Lambda(G)$ on which every Schur ultrafilter converges to $1_G$. Let $H_d$ be the group $H$ endowed with the discrete topology, and $i: H_d\rightarrow G$ be the identity homomorphism. By Proposition~\ref{key}, there exists a continuous homomorphism $\phi:\beta H_d\rightarrow G$ such that $\phi(x)=x$ for every $x\in H$. Note that $\phi$ is surjective, because $H$ is dense in $G$.  
Fix any Schur ultrafilter $u$ on $H$. Since $u$ converges to $1_G$, we get that $V\cap H\in u$ for each open neighborhood $V\subseteq G$ of $1_G$. By the continuity of the map $\phi$, for each open neighborhood $W\subseteq G$ of $\phi(u)$ there exists $U\in u$ such that $\phi(U)=U\subseteq W$. Since the space $G$ is Hausdorff, we get $\phi(u)=1_G$. Proposition~\ref{autom} implies that $G$ is a topological group.
\end{proof}

The following result follows from~\cite[Example 8.2]{FU}, see also~\cite[Exercise 1.3.40B]{Book}.

\begin{proposition}\label{example}
There exists a chart group $G$ which is not a topological group.
\end{proposition}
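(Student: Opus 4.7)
The plan is to exhibit an explicit chart group arising from topological dynamics that fails to be a topological group, along the lines of \cite[Example~8.2]{FU} and \cite[Exercise~1.3.40B]{Book}. The guiding principle is that for a minimal distal but not equicontinuous flow $(X,T)$, the \emph{enveloping semigroup} $E(X,T)$, defined as the closure of the cyclic group $\{T^n : n \in \mathbb{Z}\}$ inside $X^X$ equipped with the pointwise convergence topology, is a compact Hausdorff right topological group which is admissible yet not a topological group. Producing any such flow will therefore yield the desired counterexample.

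For a concrete instance I would take the Furstenberg skew product $T: \mathbb{T}^2 \to \mathbb{T}^2$ given by $T(x,y) = (x+\alpha, y+x)$ with $\alpha \in \mathbb{T}$ irrational, and set $G = E(\mathbb{T}^2, T)$. By general facts about enveloping semigroups, $G$ is a compact Hausdorff right topological semigroup; since left composition with any continuous self-map of $\mathbb{T}^2$ is continuous in the pointwise topology on $X^X$, the dense subset $\{T^n : n \in \mathbb{Z}\}$ of $G$ is contained in $\Lambda(G)$. Ellis's theorem applied to the (straightforwardly verified) distality of $T$ shows that $G$ is in fact a group, so $G$ is a chart group. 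To conclude that $G$ is not a topological group I would invoke the classical equivalence: $E(X,T)$ is a topological group if and only if $(X,T)$ is equicontinuous. The iterates $T^n(x,y) = \bigl(x + n\alpha,\, y + nx + \binom{n}{2}\alpha\bigr)$ depend unboundedly on $x$ through the $nx$ term, so $\{T^n\}$ fails to be equicontinuous on $\mathbb{T}^2$, and hence $G$ cannot be a topological group.

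The main obstacles in a fully detailed proof are three classical ingredients from topological dynamics: (a) Ellis's theorem that enveloping semigroups of distal flows are groups; (b) the equivalence between equicontinuity of $(X,T)$ and continuity of multiplication on $E(X,T)$; and (c) the verification of distality together with the failure of equicontinuity for the Furstenberg skew product. All three are standard and well documented, which is why the existence statement is routinely disposed of by invoking the references above rather than being reproved from scratch.
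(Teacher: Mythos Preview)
Your proposal is correct and is essentially the content behind the references the paper cites; the paper itself offers no argument at all, merely attributing the result to \cite[Example~8.2]{FU} and \cite[Exercise~1.3.40B]{Book}, both of which construct precisely the enveloping semigroup of a minimal distal non-equicontinuous flow (the Furstenberg skew product being the canonical instance). Your sketch accurately identifies the three classical ingredients (Ellis's theorem on distal flows, the equicontinuity criterion for $E(X,T)$ to be a topological group, and the explicit verification for the skew product), so nothing is missing.
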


Theorem~\ref{main} and Proposition~\ref{example} imply that Proposition~\ref{compSchur} does not generalize over chart groups. Namely,  
there exist a chart group $G$ and a Schur ultrafilter $u$ on $\Lambda(G)$ such that $u$ does not converge to $1_G$.  The following result implies that there exists a discrete group $G$ such that the congruences $\Xi$ and $\Theta$ on $\beta G$ are distinct.  

\begin{proposition}
There exists a discrete group $H$ and a closed congruence $\rho$ on $\beta H$ such that $\rho$ merges to $1_H$ all idempotents, but not all Schur ultrafilters.   
\end{proposition}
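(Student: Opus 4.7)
The plan is to combine Proposition~\ref{example} with the contrapositive of Theorem~\ref{main} and the extension machinery of Proposition~\ref{key}. First, I invoke Proposition~\ref{example} to fix a chart group $G$ which is not a topological group, and set $H=\Lambda(G)$ viewed as a discrete group. Since $G$ is admissible and $\Lambda(G)$ is a subgroup of $G$ (by the cited result of Reznichenko), $H$ is a dense subgroup of $G$.

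Next, I apply Proposition~\ref{key} to the inclusion homomorphism $\iota: H\to G$, which satisfies $\iota(H)=\Lambda(G)\subseteq \Lambda(G)$, to obtain a continuous homomorphism $\phi:\beta H\to G$ extending $\iota$; density of $H$ in $G$ together with compactness of $\beta H$ makes $\phi$ surjective. Define $\rho=\Ker(\phi)$. Since $\phi$ is continuous and $G$ is Hausdorff, $\rho=(\phi\times\phi)^{-1}(\Delta_G)$ is a closed congruence on $\beta H$. For any idempotent $e\in\beta H$, the element $\phi(e)$ is an idempotent in the group $G$, hence $\phi(e)=1_G=\phi(1_H)$, so $(e,1_H)\in\rho$; thus $\rho$ merges every idempotent of $\beta H$ to $1_H$.

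The decisive step is to produce a Schur ultrafilter $u\in\Sch(H)$ with $(u,1_H)\notin\rho$. Here the contrapositive of Theorem~\ref{main} applies: since $G$ is not a topological group, condition (iii) fails, and in particular for the dense subgroup $H=\Lambda(G)$ itself there exists a Schur ultrafilter $u$ on $H$ that does not converge to $1_G$ in the topology of $G$. A short continuity argument, using $\phi|_H=\id$ and passing through basic neighborhoods $\langle A\rangle$ of $u$ in $\beta H$, shows that $u$ converges in $G$ to $\phi(u)$: indeed, for any open neighborhood $V$ of $\phi(u)$ in $G$ there is $A\in u$ with $\langle A\rangle\subseteq\phi^{-1}(V)$, so for every $a\in A$ one has $a=\phi(a)\in V$, i.e. $V\cap H\supseteq A\in u$. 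By Hausdorffness of $G$, the failure of $u$ to converge to $1_G$ then forces $\phi(u)\neq 1_G=\phi(1_H)$, whence $(u,1_H)\notin\rho$, as required.

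I expect the only real obstacle to be bookkeeping—pinning down the image $\phi(u)$ as the unique limit of $u$ in $G$ and matching the existential in Theorem~\ref{main}(iii) with the specific choice $H=\Lambda(G)$. Otherwise, every ingredient is already in place: Proposition~\ref{example} supplies the chart group, Proposition~\ref{key} supplies the continuous extension $\phi$, and Theorem~\ref{main} supplies the witnessing Schur ultrafilter.
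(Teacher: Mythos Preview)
Your proof is correct and follows essentially the same route as the paper: take a chart group $G$ that is not a topological group, set $H=\Lambda(G)$ with the discrete topology, extend the inclusion to $\phi:\beta H\to G$ via Proposition~\ref{key}, and use $\rho=\Ker(\phi)$. The only difference is that the paper invokes Proposition~\ref{autom} directly to obtain a Schur ultrafilter $u$ with $\phi(u)\neq 1_G$, whereas you appeal to the contrapositive of Theorem~\ref{main}(iii) and then add the short continuity argument identifying $\phi(u)$ as the limit of $u$ in $G$; since Theorem~\ref{main}(iii)$\Rightarrow$(i) is itself proved via Proposition~\ref{autom}, your detour is harmless but slightly less direct.
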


\begin{proof}
Let $G$ be a chart group which is not a topological group (see Proposition~\ref{example}). Let $H$ be the group $\Lambda(G)$ endowed with the discrete topology. By Proposition~\ref{key}, the identity map $i:H\rightarrow G$ extends to a continuous homomorphism $\phi:\beta H\rightarrow G$, which is surjective, as $i(H)=H$ is dense in $G$. 
Since $G$ is a chart group, $\rho=\Ker(\phi)$ is a closed group congruence on $\beta H$. It follows that $\Theta\subseteq \rho$, where $\Theta$ is the congruence on $\beta H$ from Definition~\ref{def}. As $G$ is not a topological group, Proposition~\ref{autom} implies the existence of a Schur ultrafilter $u$ on $H$ such that $\phi(u)\neq 1_G$, implying $(u,1_H)\notin \rho$.     
\end{proof}


\section*{Acknowledgments}
We are grateful to the referee for numerous constructive remarks that essentially improved
the paper.


\begin{thebibliography}{}

\bibitem{A}
H. Anzai, S. Kakutani, {\em Bohr compactifications of a locally compact Abelian group, I}, Proc. Imperial Acad. Tokyo {\bf 19} (1943), 476--480.




 \bibitem{BR} T.~Banakh, A.~Ravsky, {\em  On feebly compact paratopological groups}, Topology Appl. {\bf 284} (2020), 107363.






\bibitem{Bekka}
B. Bekka, {\em The Bohr compactification of an arithmetic group}, preprint (2023), arXiv:2304.09045.


\bibitem{Bekka1}
B. Bekka, {\em On Bohr compactifications and profinite completions of group extensions}, Math. Proc. Cambridge {\bf 176}:2 (2024), 373--393.


\bibitem{Book}
J.F. Berglund, H.D. Junghenn, P. Milnes, {\em Analysis on semigroups}, Wiley, New York, 1989.


 \bibitem{Blass} 
A. Blass, M. Hru\v{s}\'ak, J. Verner, {\em On strong P-points}, Proc. Amer. Math. Soc. {\bf 141} (2013), 2875--2883.




\bibitem{CHK}
  J. H. Carruth, J. A. Hildebrant, R. J. Koch, {\em The theory of topological semigroups}, volume 1, Marcel Dekker, 1983.

  \bibitem{Chodounsky}
D. Chodounsk\'y, O. Guzm\'an, {\em There are no P-points in Silver extensions,} Israel J. Math. {\bf 232} (2019), 759--773.


\bibitem{CH}
W. W. Comfort, S. Hernandez, F. J. Trigos-Arrieta, {\em Relating a Locally Compact Abelian Group to Its Bohr Compactification},  Adv. Math. {\bf 120} (1996), 322--344.

\bibitem{DLS}
 H. G. Dales, A. T. Lau, D. Strauss, {\em Banach algebras on semigroups and on their compactifications}, Memoir. Am. Math. Soc. {\bf 205}:966 (2010), pp. 165.


\bibitem{DHP}
B. A. Davey, M. Haviar, H. A. Priestley, {\em Bohr compactifications of algebras and structures},
Appl. Categor. Struct. {\bf 25} (2017), 403--430.


\bibitem{E}
R. Ellis, {\em A note on the continuity of the inverse}, Proc. Am. Math. Soc. {\bf 8}
(1957), 372--373.

\bibitem{E1}
R. Ellis, {\em Distal transformation groups}, Pacific J. Math. {\bf 8} (1958), 401--405.


\bibitem{FH}
M. Ferrer, S. Hern\'{a}ndez, {\em Groups with few finite dimensional unitary representations}, preprint, (2024), arXiv:2405.02627. 

\bibitem{FHW}
S. Ferri, S. Hern\'andez, T.S. Wu,
{\em Continuity in topological groups}, Topology Appl. {\bf 153}:9 (2006), 1451--1457.

\bibitem{GF}
J. Galindo, S. Hern\'{a}ndez, {\em The concept of boundedness and the Bohr compactification of a MAP Abelian group}, Fund. Math. {\bf 159} (1999), 195--218.


\bibitem{GF1}
J. Galindo, S. Hern\'{a}ndez, {\em Interpolation sets and the bohr topology of locally compact groups}, Adv. Math. {\bf 188}:1 (2004), 51--68.


\bibitem{Gis1}
J. Gismatullin, {\em On model-theoretic connected groups}, Journal of Symbolic Logic {\bf 89}:1 (2024), 50--79.


\bibitem{GJK}
J. Gismatullin, G. Jagiella, K. Krupi\'{n}ski, {\em Bohr Compactifications of groups and rings}, Journal of Symbolic Logic {\bf 88}:3 (2023) 1103--1137.

\bibitem{Gis2}
J. Gismatullin, D. Penazzi, A. Pillay, {\em On compactifications and the topological dynamics of definable groups}, Annals of Pure and Applied Logic {\bf 165}:2 (2024), 552--562.

\bibitem{GM}
E. Glasner, M. Megrelishvili, {\em Banach representations and affine compactifications of dynamical systems}, in:\\ Asymptotic geometric analysis, Fields Institute Communications, vol {\bf 68}, Springer, 2013, pp. 75–144.

\bibitem{GM1}
E. Glasner, M. Megrelishvili, {\em New algebras of functions on topological groups arising from
$G$-spaces}, Fund. Math. {\bf 201} (2008), 1--51.



\bibitem{GM3}
E. Glasner, M. Megrelishvili, {\em Representations of dynamical systems on Banach spaces not containing $l_1$}, Trans. Amer. Math. Soc. {\bf 364} (2012), 6395--6424.

\bibitem{GM4}
E. Glasner, M. Megrelishvili, {\em On fixed point theorems and nonsensitivity}, Israel J. Math. {\bf 190} (2012), 289--305.


\bibitem{GM5} 
E. Glasner, M. Megrelishvili, {\em Banach representations and affine compactifications of dynamical systems}, in: Asymptotic Geometric Analysis, Fields Inst. Comm. {\bf 68}, Springer, New York, 2013, 75--144. 


\bibitem{GM6}
E. Glasner, M. Megrelishvili, {\em Representations of dynamical systems on Banach spaces}, in: Recent Progress in General Topology. III, Atlantis Press, Paris, 2014, 399--470.






\bibitem{HK1}
J. E. Hart, K. Kunen, {\em Bohr compactifications of discrete structures}, Fund. Math. {\bf 160} (1999), 101--151.

\bibitem{HK2}
J. E. Hart, K. Kunen, {\em Bohr compactifications of non-Abelian groups}, Topology Proc. {\bf 26} (2001-2002), 593--626.

\bibitem{H}
S. Hern\'andez, {\em The Bohr topology of discrete nonabelian groups}, Journal of Lie Theory {\bf 18}:3 (2008), 733--746. 


\bibitem{HS}
N. Hindman, D. Strauss, {\em Algebra in the Stone-\v{C}ech Compactification: Theory and Applications}, Berlin, Boston: De Gruyter, 2012.

\bibitem{J}
H. D. Junghenn, {\em Operator semigroup compactifications}, Trans. Amer. Math. Soc. {\bf 348}:3 (1996), 1051--1073.

\bibitem{Katetov}
M. Kat\v{e}tov, {\em A theorem on mappings}, CMUC {\bf 8}:3 (1967), 431--433.




\bibitem{Kunen}
K. Kunen, {\em Weak P-points in $\mathbb N^*$}, Topology, Vol. II (Proc. Fourth Colloq., Budapest, 1978), Colloq. Math. Soc. János Bolyai, vol. 23, North-Holland, Amsterdam-New York, 1980, pp. 741--749.


\bibitem{FU}
A. T.-M. Lau, R. J. Loy, {\em Banach algebras on compact right topological groups}, J. Funct. Anal. {\bf 225} (2005), 263--300.
  



\bibitem{Megre1}
M. Megrelishvili, {\em Reflexively representable but not Hilbert representable compact flows and
semitopological semigroups}, Colloq. Math. {\bf 110} (2008), 383--407. 

\bibitem{Mi}
P. Milnes, {\em Representations of compact right topological groups}, Canad. Math. Bull. {\bf 36}:3 (1993), 314--323.

\bibitem{Mil}
P. Milnes, {\em Continuity properties of compact right topological groups}, Math. Proc. Cambridge {\bf 86} (1979), 427--435.


\bibitem{MP}
P. Milnes, J. Pym, {\em Haar measure for compact right topological groups},
Proc. Amer. Math. Soc. {\bf 114}:2  (1992), 387--393.




\bibitem{M2}
W. B. Moors, {\em Some Baire semitopological groups that are topological groups}, Topology Appl. {\bf 230} (2017), 381--392.

\bibitem{M3}
W. B. Moors, {\em Fragmentable mappings and chart groups}, Fund. Math. {\bf 234} (2016), 191--200.


\bibitem{MN}
W. B. Moors, I. Namioka, {\em Furstenberg’s structure theorem via chart groups},
Ergodic Theory and Dynamical Systems {\bf 33}:3 (2013), 954--968.

\bibitem{N}
I. Namioka, {\em Right topological groups, distal flows, and a fixed-point theorem}, Math. Syst. Theory {\bf 6}:1 (1972), 193--209.

\bibitem{N1}
I. Namioka, {\em Kakutani-type fixed point theorems: A survey}, J. Fixed Point Theory Appl. {\bf 9} (2011), 1--23.

\bibitem{P}
I. Protasov, {\em Ultrafilters and topologies on groups}, Siberian. J. Math. {\bf 34}:5 (1993), 938--952. 



\bibitem{Pym}
J. S. Pym, {\em Compact semigroups with one sided continuity, in The Analytical and Topological
Theory of Semigroups}, Walter De Gruyter, Berlin,
1990.


\bibitem{R0}
E. Reznichenko, {\em Metrizability of CHART groups}, Topology Appl. {\bf 326} (2023), 108408.













    






    





\bibitem{Shelahbook}
S. Shelah, {\em Proper and improper forcing}, Perspectives in Logic, Volume 5, Springer-Verlag, Berlin, 1998.



\bibitem{SS}    
S. Solecki, S. Srivastava, {\em Automatic continuity of group operations}, Topology Appl. {\bf 77}:1 (1997), 65--75.


\bibitem{SS1}
A. Stephens, R. Stokke, {\em Totally disconnected semigroup compactifications of topological groups}, Quarterly Journal of Mathematics {\bf 74} (2023), 301--326.








\bibitem{Shelah}
E. Wimmers, {\em The Shelah P-point independence theorem}, Israel J. Math. {\bf 43}:1 (1982), 28--48.


\bibitem{Zel} Y. Zelenyuk, {\em Ultrafilters and Topologies on Groups}, Berlin, New York: De Gruyter, 2011.

\bibitem{Z1}
Y. Zelenyuk, {\em On the ultrafilter semigroup of a topological group}, Semigroup Forum {\bf 73} (2006), 301--307.

\bibitem{Z2}
Y. Zelenyuk, {\em On the ultrafilter semigroup of an abelian topological group}, Semigroup Forum {\bf 74} (2007), 467--472.

\bibitem{Zlatos}
P. Zlato\v{s}, {\em The Bohr compactification of an abelian group as a quotient of its Stone-\v{C}ech compactification}, Semigroup Forum, {\bf 101} (2020), 497--506. 

\end{thebibliography}
\end{document}